\theoremstyle{definition}
\newtheorem{theorem}{Theorem}[section]
\newtheorem{lemma}[theorem]{Lemma}
\newtheorem{proposition}[theorem]{Proposition}
\newtheorem{corollary}[theorem]{Corollary}
\theoremstyle{definition}
\newtheorem{question}[theorem]{Question}
\theoremstyle{remark}
\newtheorem{remark}[theorem]{Remark}
	\newcommand{\R}{\mathbb{R}}
	\newcommand{\N}{\mathbb{N}}
	\newcommand{\C}{\mathbb{C}}
	\newcommand{\Z}{\mathbb{Z}}
	\newcommand{\Q}{\mathbb{Q}}
	\newcommand{\h}{\mathcal{H}}
	\newcommand{\GL}{\mathrm{GL}}
	\newcommand{\SL}{\mathrm{SL}}
	\newcommand{\Stab}{\mathrm{Stab}}
	\DeclareMathOperator{\Aut}{Aut}
	\DeclareMathOperator{\Gal}{Gal}
	\DeclareMathOperator{\lcm}{lcm}
    \renewcommand{\mod}{\text{ mod}\,}
	\newcommand{\abcd}[4]{\begin{pmatrix}#1&#2\\#3& #4\end{pmatrix}}
	\newcommand{\sabcd}[4]{\left(\begin{smallmatrix}#1&#2\\#3& #4\end{smallmatrix}\right)}
	\title{Fourier expansions at cusps}
\author{Fran\c{c}ois Brunault}
\email{francois.brunault@ens-lyon.fr}
\address{\'{E}NS Lyon, UMPA, 46 all\'{e}e d'Italie, 69007 Lyon, France}
\author{Michael Neururer}
\email{neururer@mathematik.tu-darmstadt.de}
\address{TU Darmstadt, Schlo\ss gartenstr. 7, 64289 Darmstadt, Germany}
\thanks{The second author was partially funded by the DFG-Forschergruppe 1920 and the LOEWE research unit ``Uniformized Structures in Arithmetic and Geometry''}
\subjclass[2010]{11F11, 11F30, 11R18}
\keywords{Modular forms · Fourier coefficients · Cusps · Cyclotomic fields · Newforms · Atkin--Lehner operators · Algorithms}
\begin{document}

\begin{abstract}
In this article we study the fields generated by the Fourier coefficients of modular forms at arbitrary cusps. We prove that these fields are contained in certain cyclotomic extensions of the field generated by the Fourier coefficients at infinity. We also show that this bound is tight in the case of newforms with trivial Nebentypus. The main tool is a result of Shimura on the interplay between the actions of $\GL_2^+(\Q)$ and $\Aut(\C)$ on modular forms.
\end{abstract}

\maketitle

\section{Introduction}
In this article we study the fields generated by the Fourier coefficients of modular forms at the cusps of $X_1(N)$. To do this we study the connections between two actions on spaces of modular forms: the action of $\GL^+_2(\Q)$ via the slash-operator and the action of $\Aut(\C)$ on the Fourier coefficients of a modular form. A detailed study of these actions was conducted by Shimura in \cite{shimura1975}, where he proved a formula for the action of $\Aut(\C)$ on $f|g$. We provide a new proof of this result using a theorem of Khuri-Makdisi \cite{khuri-makdisi2012} on products of Eisenstein series. A second new proof from the perspective of Katz's theory of algebraic modular forms \cite{katz-interpolation}, is available on the arXiv \cite{brunaultneururer:NoFAC}.

We use this theorem to bound the fields generated by the Fourier coefficients of modular forms at the cusps. Let us assume for simplicity that $f$ is a modular form in $M_k(\Gamma_0(N))$, and let $g=\sabcd ABCD \in \SL_2(\Z)$. We show in Theorem \ref{thm:number fields general} that the coefficients of $f|g$ lie in the cyclotomic extension $K_f(\zeta_{N'})$, where $K_f$ is the field generated by the coefficients of $f$, and $N'=N/\gcd(CD,N)$. In the case $f$ has non-trivial Nebentypus, we show in Theorem \ref{thm:characters} that the coefficients of $f|g$ belong to a $1$-dimensional $K_f(\zeta_{N'})$-vector space, which is itself contained in an explicit cyclotomic extension $K_f(\zeta_M)$.

We apply these results in Section \ref{section:al ops} to find number fields that contain the Atkin--Lehner pseudo-eigenvalues of a newform, recovering a result of Cohen in \cite{cohen2018}.

In Section \ref{section:optimizing} we discuss how to choose $g$ among the matrices in $\SL_2(\Z)$ that map $\infty$ to a given cusp $\alpha \in \mathbb{P}^1(\Q)$ so that $N'$ (or $M$) becomes minimal. Assuming that $f\in M_k(\Gamma_0(N))$ is an eigenfunction of the Atkin--Lehner operators, we describe how to further reduce $N'$ by replacing $\alpha$ with its image under a suitable Atkin--Lehner operator. The Fourier expansion of $f|g$ can then easily be obtained from another Fourier expansion $f|g'$ which has coefficients in the field $K_f(\zeta_{\operatorname{gcd}(\delta,N/\delta)})$, where $\delta=\gcd(C,N)$ is the denominator of the cusp $\alpha=A/C$. Note that $\Q(\zeta_{\operatorname{gcd}(\delta,N/\delta)})$ is the field of definition of the cusp $\alpha$ in the canonical model of $X_0(N)$ over $\Q$.

In the last section, we prove in Theorem \ref{thm:exact nf Gamma0} that if $f$ is a newform for $\Gamma_0(N)$ then the number field provided by Theorem \ref{thm:number fields general} is the best possible, in the sense that it is the number field generated by the coefficients of $f|g$. 

Recently three algorithms for the computation of the Fourier expansion of $f|g$ have appeared: two algorithms in SageMath, one by Dan Collins \cite{collins2018} and another by Martin Dickson and the second author \cite{dickson-neururer}. The third algorithm was implemented in PARI/GP by Karim Belabas and Henri Cohen \cite{cohen2018}. While the first algorithm only uses numerical approximations of the Fourier coefficients in order to compute Petersson inner products, the latter two calculate the Fourier coefficients as algebraic numbers. The knowledge of the field (or vector space) generated by the Fourier coefficients of $f|g$ could provide a significant speed-up for these calculations.

\textbf{Acknowledgements:} We thank Henri Cohen for encouraging us to write this article and the referee for comments and corrections that led to an improvement of this article.

We are grateful to Abhishek Saha for pointing out an alternative approach to the Theorems in Section \ref{section:bounding field}, sketched in Remark \ref{remark:local Whittaker newforms}.

\subsection*{Notations} We denote by $\mathcal{H} = \{\tau \in \C : \mathrm{Im}(\tau)>0\}$ the upper-half plane, and by $\GL_2^+(\R)$ the group of $2 \times 2$ matrices with positive determinant. For any integer $k \in \Z$, we define the weight $k$ action of $\GL_2^+(\R)$ on functions $f : \h \to \C$ by
\[
f|_k g(\tau) = \frac{\det(g)^{k/2}}{(c\tau + d)^k} f\Bigl(\frac{a\tau+b}{c\tau+d}\Bigr) \qquad \left(g=\abcd abcd \in \GL_2^+(\R)\right).
\]
We will usually omit $k$ from the notation and just write $f|g$ for $f|_k g$.

The group $\Aut(\C)$ of automorphisms of $\C$ acts on modular forms as follows: for any modular form $f$ with Fourier expansion $f(\tau)=\sum_n a_n e^{2\pi i n\tau/w}$, we let
\[
f^\sigma(\tau) = \sum_n \sigma(a_n) e^{2\pi in\tau/w} \qquad (\sigma \in \Aut(\C)).
\]

For any integer $N \geq 1$, we denote $\zeta_N = e^{2\pi i/N} \in \C$.

\section{Eisenstein series}\label{section:eisenstein series}

\subsection{Definitions} We refer the reader to \cite[\S 3]{kato} for more details on Eisenstein series.

For integers $k \geq 1$, $N \geq 1$ and $a,b\in\Z/N\Z$, define the series
\[
E^{(k)}_{a,b}(\tau) = \frac{(k-1)!}{(-2\pi i)^k} \sum_{\substack{\omega \in \Z\tau+\Z \\ \omega \neq - (\tilde{a}\tau+\tilde{b})/N}} \frac{1}{(\omega+\frac{\tilde{a}\tau+\tilde{b}}{N})^k |\omega+\frac{\tilde{a}\tau+\tilde{b}}{N}|^{2s}} \Biggl. \Biggr|_{s=0}
\]
where $\tilde{a},\tilde{b}$ denote any representatives of $a,b$ in $\Z$, and $\cdot |_{s=0}$ denotes analytic continuation to $s=0$ (this is needed only when $k \in \{1,2\}$). It follows from the definition that the weight $k$ action of $\SL_2(\Z)$ on these series is given by $E^{(k)}_{a,b} | g = E^{(k)}_{(a,b)g}$ for every matrix $g \in \SL_2(\Z)$. In particular, the function $E^{(k)}_{a,b}$ is modular of weight $k$ with respect to the principal congruence subgroup $\Gamma(N)$. If $k \neq 2$, then $E^{(k)}_{a,b}$ is a holomorphic Eisenstein series of weight $k$ with respect to $\Gamma(N)$. If $k=2$, then $\tilde{E}^{(2)}_{a,b} := E^{(2)}_{a,b}-E^{(2)}_{0,0}$ is a holomorphic Eisenstein series of weight $2$ with respect to $\Gamma(N)$. 

\subsection{Fourier expansions}\label{subsection:fourier exp} We refer the reader to \cite[\S 3]{katz-interpolation} and \cite[Chap. VII]{schoeneberg} for proofs of the following facts.

If $k \neq 2$, then the Fourier expansion of $E^{(k)}_{a,b}$ is given by
\[
E^{(k)}_{a,b}(\tau) = a_0(E^{(k)}_{a,b}) + \sum_{\substack{m,n\geq 1 \\ m \equiv a (N)}} \zeta_N^{bn} n^{k-1} q^{mn/N} + (-1)^k \sum_{\substack{m,n\geq 1 \\ m \equiv -a (N)}} \zeta_N^{-bn} n^{k-1} q^{mn/N} \qquad (q^{1/N}=e^{2\pi i\tau/N}).
\]
If $k=2$, then the Fourier expansion of $\tilde{E}^{(2)}_{a,b}$ is given by
\[
\tilde{E}^{(2)}_{a,b}(\tau) = a_0(\tilde{E}^{(2)}_{a,b}) + \sum_{\substack{m,n\geq 1 \\ m \equiv a (N)}} \zeta_N^{bn} n q^{mn/N} + \sum_{\substack{m,n\geq 1 \\ m \equiv -a (N)}} \zeta_N^{-bn} n q^{mn/N} - 2 \sum_{m,n\geq 1} n q^{mn}.
\]
The constant terms $a_0(E^{(k)}_{a,b})$ and $a_0(\tilde{E}^{(2)}_{a,b})$ are elements of $\Q(\zeta_N)$ and are given in \cite[3.10]{kato} or \cite[\S 3]{brunault:regRZ}. We will not need the precise expressions since modularity determines them uniquely.

\begin{proposition}\label{proposition:Galois action on E}
Let $g=\sabcd ABCD\in\SL_2(\Z)$ and $\sigma \in \Aut(\C)$ such that $\sigma(\zeta_N)=\zeta_N^\lambda$ with $\lambda \in (\Z/N\Z)^\times$. If $k \neq 2$, then
\begin{align*}
(E^{(k)}_{a,b} | g)^{\sigma} = (E^{(k)}_{a,b})^\sigma | g_\lambda,
\end{align*}
where $g_\lambda$ is any lift in $\SL_2(\Z)$ of the matrix $\sabcd{A}{\lambda B}{\lambda^{-1} C}{D} \in \SL_2(\Z/N\Z)$. If $k=2$, then the same statement holds with $E^{(2)}_{a,b}$ replaced by $\tilde{E}^{(2)}_{a,b}$.
\end{proposition}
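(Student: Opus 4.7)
The plan is to reduce both sides of the claimed identity to Eisenstein series of the form $E^{(k)}_{a',b'}$ with explicit subscripts, and then to check that the two subscripts agree in $(\Z/N\Z)^2$. The two key inputs will be the slash-equivariance $E^{(k)}_{a,b}|g = E^{(k)}_{(a,b)g}$ already recorded before the proposition, and a separate description of the action of $\sigma$ on the indices.

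First I would establish the Galois-action lemma
\[
(E^{(k)}_{a,b})^\sigma = E^{(k)}_{a,\lambda b} \qquad (k \neq 2),
\]
and similarly $(\tilde{E}^{(2)}_{a,b})^\sigma = \tilde{E}^{(2)}_{a,\lambda b}$ when $k=2$. Reading off the Fourier expansions recorded in \S\ref{subsection:fourier exp}, the non-constant coefficients of $(E^{(k)}_{a,b})^\sigma$ involve the transformed roots of unity $\sigma(\zeta_N^{\pm bn}) = \zeta_N^{\pm \lambda b n}$, which match term-by-term those of $E^{(k)}_{a,\lambda b}$. For the constant term I would argue by uniqueness: both forms are holomorphic modular forms of weight $k \geq 1$ on $\Gamma(N)$ whose difference has vanishing $q$-expansion in all positive Fourier modes, hence is a constant function on $\h$; since no nonzero constant is a modular form of weight $\geq 1$, the two forms must agree. (Alternatively one could invoke the explicit formulas for the constant terms given in \cite[3.10]{kato}.)

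With the lemma in hand, the verification reduces to bookkeeping of subscripts. Writing $g = \sabcd{A}{B}{C}{D}$, the left hand side becomes
\[
(E^{(k)}_{a,b}|g)^\sigma = (E^{(k)}_{aA+bC,\,aB+bD})^\sigma = E^{(k)}_{aA+bC,\;\lambda(aB+bD)}.
\]
The right hand side equals $E^{(k)}_{a,\lambda b}|g_\lambda = E^{(k)}_{(a,\lambda b)g_\lambda}$, and this Eisenstein series only depends on $g_\lambda \bmod N$. Using that the entries of $g_\lambda$ are congruent to $A,\lambda B,\lambda^{-1}C,D$ modulo $N$, I compute
\[
(a,\lambda b)\,g_\lambda \equiv \bigl(aA + \lambda b \cdot \lambda^{-1} C,\; a\lambda B + \lambda b D\bigr) = \bigl(aA + bC,\; \lambda(aB+bD)\bigr) \pmod N,
\]
so the two indices coincide. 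The $k=2$ case is identical after replacing $E^{(2)}_{a,b}$ with $\tilde{E}^{(2)}_{a,b}$, since $E^{(2)}_{0,0}$ has rational Fourier coefficients and trivial index $(0,0)$, so it is fixed by both $\sigma$ and the slash action of $\SL_2(\Z)$; subtracting it from $E^{(2)}_{a,b}$ on each side does not disturb the equality.

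The only mildly delicate point is the constant-term step in the Galois-action lemma. I expect the cleanest path is the vanishing argument above, which sidesteps the explicit formulas for $a_0(E^{(k)}_{a,b})$; everything else is a direct matrix multiplication modulo $N$.
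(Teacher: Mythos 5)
Your proof is correct and follows essentially the same route as the paper's: establish $(E^{(k)}_{a,b})^\sigma = E^{(k)}_{a,\lambda b}$ from the Fourier expansion (with the constant term forced by modularity, exactly the point the paper flags when it says the precise expressions are not needed), then match the indices via $E^{(k)}_{a,b}|g = E^{(k)}_{(a,b)g}$ and the congruence class of $g_\lambda$ modulo $N$. The paper's proof is just a compressed version of this computation, so no further comment is needed.
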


\begin{proof}
By explicit examination of the Fourier expansion we see $(E^{(k)}_{a,b})^\sigma = E^{(k)}_{a,\lambda b}$, so that
\begin{equation*}
(E^{(k)}_{a,b} | g)^{\sigma} = E^{(k)}_{aA+bC,\lambda(aB+bD)} = E^{(k)}_{a,\lambda b} \Bigl| \begin{pmatrix} A & \lambda B \\ \lambda^{-1}C & D \end{pmatrix} = (E^{(k)}_{a,b})^\sigma | g_\lambda.
\end{equation*}
The argument for $\tilde{E}^{(2)}_{a,b}$ is similar.
\end{proof}

\subsection{Khuri-Makdisi's Theorem}\label{subsec khuri-makdisi thm}

We recall here Khuri-Makdisi's result \cite{khuri-makdisi2012} giving generators of the graded algebra of modular forms on $\Gamma(N)$. Let $\mathcal{R}_N$ be the subalgebra of $M_*(\Gamma(N))=\bigoplus_{k\geq 0} M_k(\Gamma(N))$ generated by the Eisenstein series $E^{(1)}_{a,b}$ with $a,b\in\Z/N\Z$.

\begin{theorem}\cite{khuri-makdisi2012}\label{thm:khuri-makdisi}
If $N \geq 3$, then $\mathcal{R}_N$ contains all modular forms on $\Gamma(N)$ of weight $2$ and above. In other words, $\mathcal{R}_N$ misses only the cusp forms of weight $1$ on $\Gamma(N)$.
\end{theorem}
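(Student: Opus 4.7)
The plan is to pass to algebraic geometry. For $N\geq 3$, the modular curve $X(N)$ is smooth and projective over $\C$, and carries a line bundle $\overline{\omega}$ (the Hodge bundle, suitably extended across the cusps) satisfying $M_k(\Gamma(N)) \cong H^0(X(N), \overline{\omega}^{\otimes k})$ for every $k\geq 1$. The graded algebra $M_*(\Gamma(N))$ is then identified with the section ring $\bigoplus_k H^0(X(N), \overline{\omega}^{\otimes k})$, and the desired assertion becomes: the subalgebra generated by a specific subspace of $H^0(X(N), \overline{\omega})$ fills out all of $H^0(X(N), \overline{\omega}^{\otimes k})$ for $k\geq 2$.

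First I would identify the subspace spanned by the $E^{(1)}_{a,b}$ inside $M_1(\Gamma(N))$. A direct computation from the Fourier expansions, combined with the Eisenstein/cuspidal decomposition of $M_1(\Gamma(N))$, shows that this subspace is exactly the Eisenstein part $\mathcal{E}_1 \subset M_1(\Gamma(N))$. Next I would check that $\mathcal{E}_1$ is base-point-free as a linear system on $X(N)$. At the cusps this is immediate from the Fourier expansions recalled in Section \ref{section:eisenstein series}: for each cusp at least one $E^{(1)}_{a,b}$ has a non-zero leading coefficient there. Away from the cusps I would use a moduli-theoretic description --- the $E^{(1)}_{a,b}$ can be realised via the Weil pairing on the $N$-torsion of the universal elliptic curve, giving enough sections non-vanishing at any prescribed point of $Y(N)$.

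The final step is the algebro-geometric heart of the argument: upgrading base-point-freeness to normal generation, i.e.\ showing that the multiplication maps
\[
\mathrm{Sym}^k \mathcal{E}_1 \longrightarrow H^0(X(N), \overline{\omega}^{\otimes k})
\]
are surjective for every $k\geq 2$. For $N\geq 3$ one has $\deg \overline{\omega} \geq 2g + 1$, where $g$ is the genus of $X(N)$, so a Castelnuovo--Mumford style theorem ensures that the \emph{full} space $H^0(X(N), \overline{\omega})$ normally generates the section ring. The subtlety --- and what I expect to be the main obstacle --- is that $\mathcal{E}_1$ is a proper subspace of $H^0(X(N), \overline{\omega})$, missing the weight $1$ cusp forms, so normal generation for the subsystem does not follow automatically from normal generation of the full space. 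Khuri-Makdisi circumvents this by giving an explicit moduli-theoretic formula for the product map $\mathcal{E}_1\otimes \mathcal{E}_1 \to M_2(\Gamma(N))$ and verifying surjectivity onto $M_2(\Gamma(N))$ by hand; the cases $k\geq 3$ then follow by a short induction using the surjection already established in weight $2$ together with the surjectivity of $\mathrm{Sym}^k H^0(X(N), \overline{\omega}) \to H^0(X(N), \overline{\omega}^{\otimes k})$.
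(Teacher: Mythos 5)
Your overall architecture --- identify the span of the $E^{(1)}_{a,b}$ with the Eisenstein subspace $\mathcal{E}_1\subset H^0(X(N),\overline{\omega})$, prove base-point-freeness, treat weight $2$ by an explicit moduli-theoretic argument, and bootstrap to higher weights via surjectivity of multiplication maps --- is indeed the shape of Khuri-Makdisi's argument (note that the paper does not reprove this theorem; its ``proof'' only supplies the dictionary $E^{(1)}_{a,b}=\frac{1}{2\pi i}\lambda_{(a\tau+b)/N}$ and cites Theorem 3.5, Remark 3.14 and Theorem 5.1 of \cite{khuri-makdisi2012}). However, the step you rely on for $k\geq 3$ contains a concrete error. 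On $X(N)$ one has $\deg\overline{\omega}=d/12$, where $d=\tfrac{N^3}{2}\prod_{p\mid N}(1-p^{-2})$ is the index of the image of $\Gamma(N)$ in $\mathrm{PSL}_2(\Z)$, while $2g+1=3+\tfrac{d}{6}-\epsilon_\infty$ with $\epsilon_\infty=d/N$ the number of cusps. Hence $\deg\overline{\omega}-(2g+1)=\tfrac{d}{N}-\tfrac{d}{12}-3$, which is negative for every $N\geq 12$ (for $N=12$: $\deg\overline{\omega}=48$ but $2g+1=51$). So the Castelnuovo--Mumford criterion you invoke does not apply to $\overline{\omega}$ for large $N$, and the surjectivity of $\mathrm{Sym}^k H^0(X(N),\overline{\omega})\to H^0(X(N),\overline{\omega}^{\otimes k})$ is not available by that route.

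Even granting that surjectivity, your final induction does not close for odd weights: from $M_k=M_1^k$ and $M_2=\mathcal{E}_1^2$ you get $M_k\subseteq(\mathcal{E}_1^2)^{k/2}\subseteq\mathcal{R}_N$ only when $k$ is even; for odd $k$ a stray factor of $M_1$ remains, and $M_1$ contains exactly the weight-one cusp forms you must avoid. What is actually needed is surjectivity of $\mathcal{E}_1\otimes M_{k-1}\to M_k$, and the standard tools for a base-point-free \emph{subspace} (the base-point-free pencil trick, or Mumford's generalisation of Castelnuovo's lemma) require $H^1(X(N),\overline{\omega}^{\otimes(k-2)})=0$; for $k=3$ this is $H^1(X(N),\overline{\omega})\cong S_1(\Gamma(N))^\vee$, which is nonzero for general $N$. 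So weight $3$ is a second genuine obstruction, not a routine induction step. The usable degree bound is instead $\deg\overline{\omega}^{\otimes 2}=2g-2+\epsilon_\infty\geq 2g+1$ (valid for all $N\geq 3$ since $\epsilon_\infty\geq 4$), which handles even weights and $k\geq 4$, but the passage through weight $3$ requires a separate argument --- this is precisely why the citation in the paper includes Theorem 5.1 of \cite{khuri-makdisi2012} in addition to the weight-$2$ statement. As written, your proposal has a gap exactly at the ``algebro-geometric heart'' you flagged.
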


This follows from combining \cite[Theorem 3.5, Remark 3.14, Theorem 5.1]{khuri-makdisi2012}. The link between our notations and Khuri-Makdisi's notations is
\begin{equation*}
E^{(1)}_{a,b}(\tau) = -\frac{1}{2\pi i} G_1\Bigl(\tau,\frac{a\tau+b}{N}\Bigr) = \frac{1}{2\pi i} \lambda_{(a\tau+b)/N},
\end{equation*}
see \cite[Definition 2.1 and Corollary 3.13]{khuri-makdisi2012}.

\begin{remark}
If $N=2$, then the algebra $M_*(\Gamma(2))$ is generated by the weight $2$ Eisenstein series $\tilde{E}^{(2)}_{1,0}$, $\tilde{E}^{(2)}_{0,1}$ and $\tilde{E}^{(2)}_{1,1}$, the only relation being $\tilde{E}^{(2)}_{1,0} + \tilde{E}^{(2)}_{0,1} + \tilde{E}^{(2)}_{1,1} = 0$, see \cite[Remark 3.6]{khuri-makdisi2012}.
Of course, if $N=1$ then $M_*(\SL_2(\Z))$ is freely generated by the usual Eisenstein series of weight $4$ and $6$.
\end{remark}

\section{The actions of $\SL_2(\Z)$ and $\mathrm{Aut}(\C)$ on modular forms}\label{section:GL_2(Z/NZ)-action}

In this section we investigate the connection between the natural actions of $\SL_2(\Z)$ and $\Aut(\C)$ on modular forms.

The following theorem is a particular case of a result of Shimura, see \cite[Theorem 8]{shimura1975} and \cite[Lemma 10.5]{shimura2000}. Shimura proves this theorem in a much greater generality, for meromorphic vector-valued Siegel modular forms. We will content ourselves with the special case of holomorphic modular forms:

\begin{theorem}\cite{shimura1975,shimura2000}\label{fg sigma}
Let $f\in M_k(\Gamma(N))$ be a modular form of weight $k \geq 1$ on $\Gamma(N)$. Let $g = \sabcd ABCD \in\SL_2(\Z)$ and $\sigma\in\Aut(\C)$ such that $\sigma(\zeta_N)=\zeta_N^\lambda$ with $\lambda \in (\Z/N\Z)^\times$. Then
\[
(f|g)^{\sigma} = f^{\sigma} | g_\lambda,
\]
where $g_\lambda$ is any lift in $\SL_2(\Z)$ of the matrix $\sabcd{A}{\lambda B}{\lambda^{-1} C}{D} \in \SL_2(\Z/N\Z)$.
\end{theorem}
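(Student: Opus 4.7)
My plan is to reduce Theorem \ref{fg sigma} to the Eisenstein case already verified in Proposition \ref{proposition:Galois action on E}, by invoking Khuri--Makdisi's Theorem \ref{thm:khuri-makdisi} to express an arbitrary modular form as a polynomial in the weight one Eisenstein series $E^{(1)}_{a,b}$. The key observation is that both operations involved are multiplicative: for $g \in \SL_2(\Z)$ one has $(f_1 f_2)|g = (f_1|g)(f_2|g)$ (the factor $\det(g)^{k/2}$ causes no issue since $\det g = 1$), and $(f_1 f_2)^\sigma = f_1^\sigma f_2^\sigma$ because $\sigma$ acts as a ring homomorphism on the Fourier coefficients. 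It follows that the collection of modular forms satisfying the identity $(f|g)^\sigma = f^\sigma | g_\lambda$ is closed under sums and products (and, with $\sigma$ applied to the coefficient, under scalar multiplication).

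First I would dispose of the case $N \in \{1,2\}$ by passing to a multiple $N' \geq 3$ of $N$ and viewing $f$ as a form of level $\Gamma(N')$. Any lift $\lambda' \in (\Z/N'\Z)^\times$ of $\lambda$ produces a matrix $g_{\lambda'}$ that is congruent to $g_\lambda$ modulo $N$, and therefore acts identically on $f^\sigma \in M_k(\Gamma(N))$ via the slash operator; so the conclusion at level $N'$ implies the conclusion at level $N$.

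Next, assuming $N \geq 3$ and $k \geq 2$, Theorem \ref{thm:khuri-makdisi} writes $f$ as a polynomial in the series $E^{(1)}_{a,b}$. Each such Eisenstein series satisfies the identity by Proposition \ref{proposition:Galois action on E}, and multiplicativity propagates the identity to $f$.

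The main obstacle is the weight one case, since Khuri--Makdisi's theorem excludes weight one cusp forms. To handle it, I would fix a weight one Eisenstein series $E = E^{(1)}_{a,b}$ that is not identically zero---for instance $E^{(1)}_{1,0}$, whose $q^{1/N}$ Fourier coefficient equals $1$ when $N \geq 3$---and consider the product $Ef \in M_2(\Gamma(N))$, which lies in $\mathcal{R}_N$ and therefore satisfies the identity by the previous step. Expanding $((Ef)|g)^\sigma$ in two ways via multiplicativity, and using the Eisenstein case $(E|g)^\sigma = E^\sigma|g_\lambda$, yields
\[
(E^\sigma | g_\lambda)(f|g)^\sigma = (E^\sigma | g_\lambda)(f^\sigma | g_\lambda).
\]
Since $E^\sigma | g_\lambda$ is a nonzero meromorphic function on $\mathcal{H}$, we may cancel it to obtain the desired identity $(f|g)^\sigma = f^\sigma | g_\lambda$ for $f$.
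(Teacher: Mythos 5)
Your proof is correct and follows essentially the same route as the paper's first proof: reduce to the Eisenstein series $E^{(1)}_{a,b}$ via Khuri--Makdisi's theorem, using that both $h \mapsto (h|g)^\sigma$ and $h \mapsto h^\sigma|g_\lambda$ are $\sigma$-linear ring homomorphisms. The only differences are cosmetic and both of your variants are valid: you handle weight $1$ by multiplying by a nonzero $E^{(1)}_{a,b}$ and cancelling (the paper instead writes $f = f^3/f^2$ for weight-one cusp forms), and you dispose of $N \in \{1,2\}$ by raising the level to some $N' \geq 3$ (the paper treats $N=2$ directly via the weight-$2$ Eisenstein generators of $M_*(\Gamma(2))$).
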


\begin{proof}
Let us assume $N \geq 3$, and let $f\in \mathcal{R}_N$. The maps $h \mapsto (h | g)^\sigma$ and $h \mapsto h^\sigma | g_\lambda$ are both $\sigma$-linear ring homomorphisms. We are thus reduced to the case $f=E^{(1)}_{a,b}$, which follows from Proposition \ref{proposition:Galois action on E}. If $f \in S_1(\Gamma(N))$, then $f^2$ and $f^3$ are in $\mathcal{R}_N$, so Theorem \ref{fg sigma} holds for them. Using $f = f^3/f^2$, we get $(f|g)^\sigma = f^\sigma | g_\lambda$. In the case $N=2$, we proceed similarly by applying Proposition \ref{proposition:Galois action on E} to $\tilde{E}^{(2)}_{a,b}$. Finally, the case $N=1$ is trivial.
\end{proof}

For another proof of this theorem using algebraic modular forms, see \cite{brunaultneururer:NoFAC}. If we restrict to modular forms on $\Gamma_1(N)$, then Theorem \ref{fg sigma} also follows from the result of Borisov and Gunnells \cite[Thm 5.15]{BG3} that all modular forms of sufficiently large weight are toric.

By Theorem \ref{fg sigma}, the space $M_k(\Gamma(N);\Q(\zeta_N))$ of modular forms with coefficients in $\Q(\zeta_N)$ is stable under the weight $k$ action of $\SL_2(\Z)$. It is thus endowed with a right action of $\SL_2(\Z/N\Z)$. The Galois group $\Gal(\Q(\zeta_N)/\Q)$ also acts on $M_k(\Gamma(N);\Q(\zeta_N))$ via the Fourier expansion. For any $f \in M_k(\Gamma(N);\Q(\zeta_N))$, let us then define
\begin{equation*}
f \Bigl| \abcd100\lambda = f^{\sigma_\lambda} \qquad (\lambda \in (\Z/N\Z)^\times),
\end{equation*}
where $\sigma_\lambda \in \Gal(\Q(\zeta_N)/\Q)$ is the automorphism defined by $\sigma_\lambda(\zeta_N)=\zeta_N^\lambda$. Then the above actions of $\SL_2(\Z/N\Z)$ and $(\Z/N\Z)^\times$ combine to give a right action of $\GL_2(\Z/N\Z)$ on $M_k(\Gamma(N);\Q(\zeta_N))$.
Indeed 
\begin{equation}\label{eq g glambda}
g \abcd 100\lambda = \abcd A{\lambda B}C{\lambda D} = \abcd 100\lambda \abcd A{\lambda B}{\lambda^{-1} C}D = \abcd 100\lambda g_\lambda
\end{equation}
and Theorem \ref{fg sigma} says precisely that both sides of this equality act in the same way on $M_k(\Gamma(N);\Q(\zeta_N))$. Note also that with this definition, the identities $E^{(k)}_{a,b} | g = E^{(k)}_{(a,b)g}$ for $k\neq 2$ and $\tilde{E}^{(2)}_{a,b} |g = \tilde{E}^{(2)}_{(a,b)g}$ are true for any $g \in \GL_2(\Z/N\Z)$.

\begin{remark}
Let $\tilde{Y}(N)$ be the model of $\Gamma(N)\backslash\mathcal{H}$ over $\Q(\zeta_N)$ constructed by Shimura in \cite[Chapter 6]{shimura1971}. The automorphism group of the $\Q$-scheme $\tilde{Y}(N)$ contains $\GL_2(\Z/N\Z)$. This gives a right action of $\GL_2(\Z/N\Z)$ on the function field $F_N$ of $\tilde{Y}(N)$, which is a subfield of $\Q(\zeta_N)((q^{1/N}))$. The action of $\SL_2(\Z/N\Z)$ is the slash action in weight $0$, and the action of the diagonal matrix $\sabcd 100\lambda$ with $\lambda \in (\Z/N\Z)^\times$ coincides with the natural action of $\sigma_\lambda$ on the Fourier coefficients. In other words, Theorem \ref{fg sigma} also holds for modular functions. This can also be checked using the functions denoted by $f_a$ in \cite[6.1.A]{shimura1971}, which generate $F_N$. Once the statement is proved for modular functions, Theorem \ref{fg sigma} for a given weight $k \geq 1$ follows by inspection on just one Eisenstein series of weight $k$.
\end{remark}

\begin{corollary}\label{Kf Qchi}
Let $f \in M_k(\Gamma_0(N),\chi)$ be a nonzero modular form of weight $k \geq 1$, level $N \geq 1$ and Nebentypus character $\chi$. Then the field $K_f$ generated by the Fourier coefficients of $f$ contains the field $\Q(\chi)$ generated by the values of $\chi$.
\end{corollary}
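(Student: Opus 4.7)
The plan is to apply Theorem \ref{fg sigma} to matrices in $\Gamma_0(N)$ and exploit the Nebentypus relation $f | g = \chi(d) f$ for $g=\sabcd abcd \in \Gamma_0(N)$. Concretely, I will show that any $\sigma \in \Aut(\C)$ which fixes $K_f$ automatically fixes every value $\chi(d)$, which forces $\Q(\chi) \subseteq K_f$ since $\Q(\chi)$ is a number field.

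First I would fix $\sigma \in \Aut(\C)$ with $f^\sigma = f$ and write $\sigma(\zeta_N) = \zeta_N^\lambda$ with $\lambda \in (\Z/N\Z)^\times$. I would then recall the elementary fact that the reduction map $\Gamma_0(N) \to (\Z/N\Z)^\times$, $\sabcd abcd \mapsto d \bmod N$, is surjective (given $d$ coprime to $N$, choose $a$ with $ad \equiv 1 \pmod N$, say $ad = 1 + kN$, and take $b=k$, $c=N$). So for any target $d \in (\Z/N\Z)^\times$ I have some $g = \sabcd abcd \in \Gamma_0(N)$ whose lower-right entry reduces to $d$.

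The key computation uses Theorem \ref{fg sigma}. On one hand, $(f|g)^\sigma = \sigma(\chi(d)) f^\sigma = \sigma(\chi(d)) f$. On the other hand, the theorem gives $(f|g)^\sigma = f^\sigma | g_\lambda = f | g_\lambda$, where $g_\lambda$ is any lift of $\sabcd{a}{\lambda b}{\lambda^{-1}c}{d} \in \SL_2(\Z/N\Z)$. The crucial observation is that $c \equiv 0 \pmod N$ implies $\lambda^{-1} c \equiv 0 \pmod N$, so one can pick the lift $g_\lambda$ to lie in $\Gamma_0(N)$, and its lower-right entry still reduces to $d$ modulo $N$. Hence $f | g_\lambda = \chi(d) f$, and comparing the two expressions together with $f \neq 0$ yields $\sigma(\chi(d)) = \chi(d)$.

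Since this equality holds for every $d \in (\Z/N\Z)^\times$, $\sigma$ fixes the number field $\Q(\chi)$ pointwise. As this holds for every $\sigma \in \Aut(\C)$ fixing $K_f$, standard Galois theory (any element of a number field not in $K_f$ admits a nontrivial $K_f$-conjugate, which extends to an automorphism of $\C$) gives $\Q(\chi) \subseteq K_f$. There is no real obstacle beyond bookkeeping; the main point to verify carefully is that the hypothesis $c \equiv 0 \pmod N$ propagates through the transformation $g \mapsto g_\lambda$ so that $g_\lambda$ remains in $\Gamma_0(N)$.
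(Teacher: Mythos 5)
Your proof is correct and is essentially the paper's own argument: apply Theorem \ref{fg sigma} to $g \in \Gamma_0(N)$, note that $g_\lambda$ stays in $\Gamma_0(N)$ with the same lower-right entry, and compare $\sigma(\chi(d))f$ with $\chi(d)f$. The extra remarks on surjectivity of $\sabcd abcd \mapsto d$ and on extending conjugates to automorphisms of $\C$ are correct fillings-in of details the paper leaves implicit.
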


\begin{proof}
We have to show that every $\sigma \in \Aut(\C/K_f)$ fixes $\Q(\chi)$. Let $g \in \Gamma_0(N)$. Then $f|g=\chi(g) f$. Applying $\sigma$, we get $(f | g)^\sigma = \chi(g)^\sigma f$. But Theorem \ref{fg sigma} implies
\begin{equation*}
(f | g)^\sigma = f^\sigma | g_\lambda = f | g_\lambda = \chi(g_\lambda) f = \chi(g) f,
\end{equation*}
so that $\chi=\chi^\sigma$.
\end{proof}

Corollary \ref{Kf Qchi} can also be proved using Katz's theory of algebraic modular forms, noting that the diamond operators $\langle \delta \rangle$, $\delta \in (\Z/N\Z)^\times$ are defined over $\Q$, hence leave stable the space $M_k(\Gamma_1(N);K)$ of modular forms with coefficients in a fixed subfield $K$ of $\C$.

\section{Bounding the coefficient field of $f | g$}\label{section:bounding field}

We now apply the results in Section \ref{section:GL_2(Z/NZ)-action} to get information on the field generated by the Fourier coefficients of a modular form at a given cusp.

\begin{theorem}\label{thm:number fields general}
Let $f \in M_k(\Gamma_1(N))$ be a modular form of integral weight $k \geq 1$ on $\Gamma_1(N)$. Let $K_f$ be the subfield of $\C$ generated by the Fourier coefficients $a_n(f)$, $n \geq 1$. Let $g = \sabcd ABCD \in \SL_2(\Z)$.
\begin{enumerate}
\item The modular form $f |_k g$ has coefficients in $K_f(\zeta_M)$ with $M=N/\gcd(C,N)$.
\item If $f \in M_k(\Gamma_0(N))$ then $f |_k g$ has coefficients in $K_f(\zeta_{N'})$ with $N'=N/\gcd(CD,N)$.
\end{enumerate}
\end{theorem}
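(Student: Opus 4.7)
The plan is to show that any $\sigma \in \Aut(\C)$ fixing the candidate field (namely $K_f(\zeta_M)$ for (1), $K_f(\zeta_{N'})$ for (2)) satisfies $(f|g)^\sigma = f|g$, which places the Fourier coefficients of $f|g$ in that field. Since $f \in M_k(\Gamma_1(N)) \subset M_k(\Gamma(N))$, Theorem \ref{fg sigma} applies and gives $(f|g)^\sigma = f^\sigma | g_\lambda$, where $\lambda \in (\Z/N\Z)^\times$ is determined by $\sigma(\zeta_N) = \zeta_N^\lambda$. The hypothesis that $\sigma$ fixes $K_f$ forces $f^\sigma = f$, so the task reduces to showing $f | g_\lambda = f | g$, equivalently that $g_\lambda g^{-1}$ lies in $\Gamma_1(N)$ for (1) or $\Gamma_0(N)$ for (2). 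The ambiguity in the choice of lift of $g_\lambda$ is harmless since two lifts differ by an element of $\Gamma(N) \subset \Gamma_1(N) \subset \Gamma_0(N)$.

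Next I would compute $g_\lambda g^{-1}$ modulo $N$ entry by entry, using $AD - BC = 1$ to simplify. The bottom-left entry comes out to $CD(\lambda^{-1} - 1) \pmod{N}$, while the two diagonal entries are $1$ plus unit multiples of $BC(\lambda - 1) \pmod{N}$. For part (2), the $\Gamma_0(N)$-condition is the single congruence $CD(\lambda - 1) \equiv 0 \pmod{N}$, which is precisely $\lambda \equiv 1 \pmod{N'}$. For part (1), $\Gamma_1(N)$ additionally requires both diagonal entries to be $\equiv 1 \pmod{N}$, yielding $BC(\lambda - 1) \equiv 0 \pmod{N}$. Combining these two conditions with the arithmetic fact $\gcd(B, D) = 1$ (forced by $\det g = 1$), the pair collapses to $C(\lambda - 1) \equiv 0 \pmod{N}$, i.e., $\lambda \equiv 1 \pmod{M}$.

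A Galois-theoretic translation then completes the argument: since $\zeta_M = \zeta_N^{N/M}$, we have $\sigma(\zeta_M) = \zeta_M^\lambda$, so $\sigma$ fixes $\zeta_M$ iff $\lambda \equiv 1 \pmod{M}$, and similarly for $\zeta_{N'}$. Thus the hypothesis ``$\sigma$ fixes $K_f(\zeta_M)$'' matches exactly the condition for $g_\lambda g^{-1} \in \Gamma_1(N)$, and ``$\sigma$ fixes $K_f(\zeta_{N'})$'' matches $g_\lambda g^{-1} \in \Gamma_0(N)$.

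The main obstacle is not conceptual but bookkeeping: the small number-theoretic lemma showing that $BC(\lambda-1) \equiv 0$ and $CD(\lambda-1) \equiv 0 \pmod{N}$ together imply $C(\lambda - 1) \equiv 0 \pmod{N}$, using $\gcd(B, D, N) = 1$ (which follows from $\gcd(B,D)=1$, itself forced by $\det g = 1$). Everything else is a direct unpacking of Theorem \ref{fg sigma} through Galois theory.
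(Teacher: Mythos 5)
Your proposal is correct and follows essentially the same route as the paper: apply Theorem \ref{fg sigma}, compute $g_\lambda g^{-1}$ modulo $N$, and translate the congruence conditions $CD(\lambda-1)\equiv 0$ (for $\Gamma_0(N)$) and additionally $BC(\lambda-1)\equiv 0$ (for $\Gamma_1(N)$) into $\lambda\equiv 1 \pmod{N'}$ and $\lambda\equiv 1\pmod{M}$ respectively, using $\gcd(B,D)=1$. The paper's proof is the same argument, including the reduction via $\gcd(B,D)=1$.
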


\begin{proof}
Let $\sigma \in \Aut(\C)$. By Theorem \ref{fg sigma}, a sufficient condition for $f|g$ being fixed by $\sigma$ is given by $f^{\sigma} = f = f | g_\lambda g^{-1}$, where $\sigma(\zeta_N)=\zeta_N^\lambda$. We have
\begin{equation}\label{glambda ginv}
g_\lambda g^{-1} \equiv \abcd{AD-\lambda BC}{AB(\lambda-1)}{CD(\lambda^{-1}-1)}{AD-\lambda^{-1}BC} \mod N.
\end{equation}
We see that $g_\lambda g^{-1} \in \Gamma_0(N)$ if and only if $\lambda \equiv 1\mod N'$. If $f \in M_k(\Gamma_0(N))$, then $f|g$ is fixed by every $\sigma \in \Aut(\C/K_f(\zeta_{N'}))$, hence has coefficients in $K_f(\zeta_{N'})$, which proves (2).

Furthermore $AD-\lambda BC = 1+BC(1-\lambda)$ so that $g_\lambda g^{-1}\in\Gamma_1(N)$ if and only if $\lambda \equiv 1\mod N'$ and $\lambda \equiv 1\mod N/\gcd(BC,N)$. Since $B$ and $D$ are coprime, the conjunction of these conditions is equivalent to $\lambda \equiv 1\mod N/\gcd(C,N)$. This proves (1).
\end{proof}

We now turn to modular forms with Nebentypus. We will actually bound not only the field of coefficients of $f|g$, but also the \emph{vector space} generated by the coefficients of $f|g$.

In order to state our results, we need some more notation. Let $f \in M_k(\Gamma_0(N),\chi)$, where $\chi$ is a Dirichlet character of conductor $m$ dividing $N$, and let $g = \sabcd ABCD \in \SL_2(\Z)$. Put $N'=N/\gcd(CD,N)$, $m'=m/\gcd(BC,m)$ and $M_\chi=\lcm(N',m')$. Let $K=K_f(\zeta_{N'})$ and $L=K_f(\zeta_{M_\chi})$. Since $L=K(\zeta_{m'})$, the extension $L/K$ is abelian and its Galois group $G=\Gal(L/K)$ identifies with a subgroup $G'$ of $(\Z/m'\Z)^\times$ by means of the cyclotomic character $\lambda : G \to (\Z/m'\Z)^\times$. Since $\Gal(L/K) \cong \Gal(\Q(\zeta_{m'})/K')$ with $K'=K \cap \Q(\zeta_{m'})$, the subgroup $G' \subset (\Z/m'\Z)^\times$ corresponds to the subfield $K' \subset \Q(\zeta_{m'})$.

\begin{lemma}\label{lemma chig}
The map $\chi_g : G \to \C^\times$ defined by
\begin{equation*}
\chi_g(\sigma) = \chi(AD-\lambda(\sigma)^{-1} BC) \qquad (\sigma \in G)
\end{equation*}
is a group homomorphism.
\end{lemma}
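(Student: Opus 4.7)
The approach is to reinterpret $\chi_g(\sigma)$ as the scalar by which $\sigma$ acts on $f|g$, for any nonzero $f \in M_k(\Gamma_0(N),\chi)$ (in any weight $k$ for which the space is nonzero, e.g.\ taking $f$ to be a suitable Eisenstein series). Multiplicativity of $\chi_g$ then follows from the associativity of the Galois action on the one-dimensional $\C$-space spanned by $f|g$. The main step of verification is identifying the formula defining $\chi_g(\sigma)$ with $\chi(g_{\lambda'}g^{-1})$ via (\ref{glambda ginv}); the main subtlety I anticipate is ensuring the existence of $f$ with $f|g \neq 0$, which is immediate since the slash action preserves nonvanishing and Eisenstein series of Nebentypus $\chi$ exist in appropriate weights.

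First, I would lift the image $\lambda(\sigma) \in (\Z/m'\Z)^\times$ to some $\lambda' \in (\Z/N\Z)^\times$ with $\lambda' \equiv 1 \pmod{N'}$ and $\lambda' \equiv \lambda(\sigma) \pmod{m'}$. Such a lift exists by CRT, since $\sigma \in G$ fixes $K \supset \Q(\zeta_{\gcd(N',m')})$, forcing $\lambda(\sigma) \equiv 1 \pmod{\gcd(N',m')}$, which reconciles the two congruences. Then (\ref{glambda ginv}) shows that $g_{\lambda'}g^{-1} \in \Gamma_0(N)$ with lower-right entry $AD - (\lambda')^{-1}BC$ modulo $N$, which reduces modulo $m$ to $AD - \lambda(\sigma)^{-1}BC$ (well-defined because $m'\cdot BC \equiv 0 \pmod m$). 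Hence $\chi_g(\sigma) = \chi(g_{\lambda'}g^{-1}) \in \C^\times$.

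Next, Theorem \ref{fg sigma} together with $f^\sigma = f$ (as $\sigma$ fixes $K_f \subset K$) yields
\begin{equation*}
(f|g)^\sigma = f^\sigma | g_{\lambda'} = f | g_{\lambda'} = (f | g_{\lambda'} g^{-1}) | g = \chi(g_{\lambda'}g^{-1})(f|g) = \chi_g(\sigma)(f|g).
\end{equation*}
For $\sigma,\tau \in G$, applying this identity twice with the convention $f^{\sigma\tau} = (f^\tau)^\sigma$ gives
\begin{equation*}
\chi_g(\sigma\tau)(f|g) = \bigl((f|g)^\tau\bigr)^\sigma = \sigma(\chi_g(\tau))\chi_g(\sigma)(f|g) = \chi_g(\tau)\chi_g(\sigma)(f|g),
\end{equation*}
where we use $\chi_g(\tau) \in \Q(\chi) \subset K_f \subset K$ (by Corollary \ref{Kf Qchi}), so $\sigma$ fixes it. Cancelling the nonzero $f|g$ gives the homomorphism property. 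A purely algebraic attempt via the identity $(AD - uBC)(AD - vBC) = (AD - uvBC) + AD \cdot BC(1-u)(1-v)$ would require showing the error term lies in $\ker(\chi \bmod m)$ for $u,v \in G'$, which seems to require an intricate case analysis exploiting the $\SL_2(\Z)$ relations among $A,B,C,D$, and is thus less attractive than the route above.
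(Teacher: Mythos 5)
Your argument is correct, but it proves multiplicativity by a genuinely different route from the paper's. The paper stays inside $\GL_2(\Z/N\Z)$: from (\ref{eq g glambda}) it derives the cocycle-type identity $g_{\mu\mu'}g^{-1}=\sabcd 100{\mu^{-1}}\,g_{\mu'}g^{-1}\,\sabcd 100{\mu}\,g_{\mu}g^{-1}$, specialises to $\mu,\mu'\equiv 1\bmod N'$ so that all factors become upper-triangular mod $N$, and reads off multiplicativity of $\mu\mapsto AD-\mu^{-1}BC$ from the bottom-right entries; no modular form is involved. You instead realise $\chi_g(\sigma)$ as the eigenvalue of $\sigma$ on the nonzero vector $f|g$ --- which is exactly identity (\ref{fg sigma eq1}), written down in the paper only afterwards, in the proof of Theorem \ref{fg Lchig} --- and multiplicativity comes for free from composing the Galois action, using Corollary \ref{Kf Qchi} to see the eigenvalues are $\sigma$-invariant. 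There is no circularity (that identity does not use the lemma), and the argument is arguably slicker, at the cost of needing $f\neq 0$; note that your fallback of replacing $f$ by an Eisenstein series would not quite work as stated, since $G$ is defined via $K_f$ for the given $f$. Two points to tighten: to apply Theorem \ref{fg sigma} you must extend $\sigma\in G$ to some $\tilde\sigma\in\Aut(\C/K)$, whose cyclotomic character mod $N$ need not coincide with your CRT lift $\lambda'$ --- but it is $\equiv 1\bmod N'$ and $\equiv\lambda(\sigma)\bmod m'$, which is all the computation uses, since $\chi(AD-\lambda^{-1}BC)$ depends only on $\lambda\bmod m'$; and in the composition step one should fix extensions $\tilde\sigma,\tilde\tau$ and use that $\tilde\sigma\tilde\tau$ restricts to $\sigma\tau$ on $L$. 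Finally, the ``purely algebraic'' route you dismiss does in fact go through: the error term in your identity factors as $ADBC(1-u)(1-v)=\bigl(CD(1-u)\bigr)\bigl(AB(1-v)\bigr)$, which is divisible by $N$ once $u,v\equiv 1\bmod N'$ --- this is essentially what the paper's matrix computation encodes.
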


\begin{proof}
Let $\sigma \in G$. Note that $m'BC$ is divisible by $m$, so that $AD-\lambda(\sigma)^{-1}BC$ is well-defined in $\Z/m\Z$. Let $\lambda_N(\sigma) \in (\Z/N\Z)^\times$ denote the cyclotomic character modulo $N$. Since $\lambda_N(\sigma) \equiv 1 \mod N'$, the identity (\ref{glambda ginv}) shows that $g_{\lambda_N(\sigma)} g^{-1}$ is upper-triangular modulo $N$. It follows that $AD-\lambda_N(\sigma)^{-1} BC \in (\Z/N\Z)^\times$ and thus $AD-\lambda(\sigma)^{-1} BC \in (\Z/m\Z)^\times$. Therefore the map $\chi_g$ is well-defined.

Let us show that $\chi_g$ is a group homomorphism. We may write $\chi_g$ as the composition
\begin{equation*}
G \xrightarrow{\lambda} G' \xrightarrow{\psi} (\Z/m\Z)^\times \xrightarrow{\chi} \C^\times
\end{equation*}
where $\psi$ is defined by $\psi(\mu)=AD-\mu^{-1}BC$. Using the relation \eqref{eq g glambda}, we get the following identity in $\GL_2(\Z/N\Z)$
\begin{equation*}
g_{\mu \mu'} g^{-1} = \abcd100{\mu^{-1}} g_{\mu'} g^{-1} \abcd100\mu g_\mu g^{-1} \qquad (\mu,\mu' \in (\Z/N\Z)^\times).
\end{equation*}
Specialising to the case $\mu,\mu' \equiv 1 \mod N'$ and comparing the bottom-right entries, we deduce that $\psi$ is a group homomorphism.
\end{proof}

Note that the character $\chi_g$ takes values in $\Q(\chi)^\times$, which is contained in $K_f^\times$ by Corollary \ref{Kf Qchi}. By the normal basis theorem, $L$ is a free $K[G]$-module of rank 1. Since $K_f$ is contained in $K$, the character $\chi_g$ cuts out a $K$-line $L^{\chi_g}$ in $L$, namely
\begin{equation}
L^{\chi_g} = \{x \in L : \forall \sigma \in G, \sigma(x) = \chi_g(\sigma) x \}.
\end{equation}
We are now ready to state our result.

\begin{theorem}\label{fg Lchig}
The modular form $f | g$ has coefficients in $L ^{\chi_g}$.
\end{theorem}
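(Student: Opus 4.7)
The plan is to combine Theorem \ref{fg sigma} with the Nebentypus transformation $f|h = \chi(h) f$ for $h \in \Gamma_0(N)$, reusing the matrix computation (\ref{glambda ginv}) already carried out in the proof of Theorem \ref{thm:number fields general}. I would split the statement $f|g \in L^{\chi_g}$ into two points: (i) the coefficients of $f|g$ lie in $L$, and (ii) for every $\sigma \in G = \Gal(L/K)$, $\sigma(f|g) = \chi_g(\sigma) \cdot (f|g)$.

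For (i), I would take $\sigma \in \Aut(\C/L)$ and set $\lambda = \lambda_N(\sigma) \in (\Z/N\Z)^\times$. Since $\sigma$ fixes $K_f$, Theorem \ref{fg sigma} gives $(f|g)^\sigma = f | g_\lambda = (f | g_\lambda g^{-1}) | g$. Because $\sigma$ also fixes $\zeta_M$ and $M$ divides $N$, we get $\lambda \equiv 1 \mod M$, hence both $\lambda \equiv 1 \mod N'$ and $\lambda \equiv 1 \mod m'$. The first congruence combined with (\ref{glambda ginv}) shows $g_\lambda g^{-1} \in \Gamma_0(N)$, so $f | g_\lambda g^{-1} = \chi(AD - \lambda^{-1} BC) f$. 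The second congruence, together with $AD - BC = 1$ and the factorisation $m = m' \cdot \gcd(BC,m)$, forces $AD - \lambda^{-1} BC \equiv 1 \mod m$; the character evaluates to $1$ and $(f|g)^\sigma = f|g$.

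For (ii), I would take $\sigma \in G$, choose a lift $\tilde\sigma \in \Aut(\C/K)$, and set $\lambda = \lambda_N(\tilde\sigma)$. Then $\tilde\sigma$ fixes $K_f$ and $\zeta_{N'}$, so $\lambda \equiv 1 \mod N'$, while $\lambda \mod m'$ equals the value $\lambda(\sigma)$ used to define $\chi_g$. Rerunning the computation from (i)---which needs only the congruence modulo $N'$ to place $g_\lambda g^{-1}$ in $\Gamma_0(N)$---yields
\[
(f|g)^{\tilde\sigma} = (f|g_\lambda g^{-1}) | g = \chi(AD - \lambda^{-1} BC) \cdot (f|g) = \chi_g(\sigma) \cdot (f|g).
\]
By (i), $f|g$ has coefficients in $L$, so $(f|g)^{\tilde\sigma}$ and $\sigma(f|g)$ coincide, which gives the required transformation law and proves $f|g \in L^{\chi_g}$.

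The only delicate point is the congruence $BC(\lambda^{-1} - 1) \equiv 0 \mod m$ under $\lambda \equiv 1 \mod m'$, which is immediate from $m = m' \cdot \gcd(BC,m)$ and is essentially the well-definedness check already performed in Lemma \ref{lemma chig}. I do not anticipate any further obstacle.
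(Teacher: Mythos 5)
Your proposal is correct and follows essentially the same route as the paper: the paper runs the single computation $(f|g)^\sigma = f|g_\lambda g^{-1} g = \chi(AD-\lambda^{-1}BC)\,f|g$ for all $\sigma \in \Aut(\C/K)$ at once, which simultaneously gives membership in $L$ (taking $\sigma$ fixing $L$, where $\chi_g$ is trivial by Lemma \ref{lemma chig}) and the $\chi_g$-eigenvector property. Your explicit verification that $AD-\lambda^{-1}BC \equiv 1 \bmod m$ when $\lambda \equiv 1 \bmod m'$ is a correct unwinding of what the paper delegates to the well-definedness of $\chi_g$ on $G=\Gal(L/K)$.
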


\begin{proof}
Let $\sigma \in \Aut(\C/K)$ with $\sigma(\zeta_N)=\zeta_N^\lambda$. Since $\lambda \equiv 1 \mod N'$, we have $g_\lambda g^{-1}\in \Gamma_0(N)$. Then
\begin{equation}\label{fg sigma eq1}
(f | g)^\sigma = f^\sigma | g_\lambda = f | g_\lambda g^{-1} g = \chi(AD-\lambda^{-1}BC) f|g = \chi_g(\sigma |_L) f|g.
\end{equation}
In particular $f | g$ is fixed by $\Aut(\C/L)$, hence has coefficients in $L$. Moreover (\ref{fg sigma eq1}) shows that $f|g$ has coefficients in $L^{\chi_g}$.
\end{proof}

We summarise our result and make it slightly more precise as follows.

\begin{theorem}\label{thm:characters}
Let $f \in M_k(\Gamma_0(N),\chi)$, where $\chi$ is a Dirichlet character of conductor $m$ dividing $N$, and let $g = \sabcd ABCD \in \SL_2(\Z)$. Put $N'=N/\gcd(CD,N)$, $m'=m/\gcd(BC,m)$. Then $f |_k g$ has coefficients in $K_f(\zeta_{M_\chi})$ with
\begin{equation*}
M_\chi=\lcm(N',m') = \frac{\lcm(NB,mD,BCD)}{BCD}.
\end{equation*}
More precisely, let $G'$ be the subgroup of $(\Z/m'\Z)^\times$ corresponding to the abelian number field $K_f(\zeta_{N'}) \cap \Q(\zeta_{m'})$, and let $\zeta$ be any $m'$-th root of unity such that
\begin{equation*}
c_{\chi,g} := \sum_{\mu \in G'} \chi(AD-\mu BC) \zeta^\mu
\end{equation*}
is nonzero (such a $\zeta$ always exists). Then $f |_k g$ has coefficients in $c_{\chi,g} \cdot K_f(\zeta_{N'})$.
\end{theorem}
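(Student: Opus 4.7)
The first assertion that $f|_k g$ has Fourier coefficients in $K_f(\zeta_M)$ is immediate from Theorem~\ref{fg Lchig} together with the inclusion $L^{\chi_g} \subset L = K_f(\zeta_M)$. The interest lies in the refinement, which I would prove by showing that, whenever nonzero, $c_{\chi,g}$ generates the one-dimensional $K$-vector space $L^{\chi_g}$ (with $K := K_f(\zeta_{N'})$), and that a suitable $m'$-th root of unity $\zeta$ making $c_{\chi,g} \neq 0$ always exists.

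The first ingredient is a dimension count. By Corollary~\ref{Kf Qchi}, $\chi_g$ takes values in $\Q(\chi)^\times \subset K^\times$, so the idempotent $e_{\chi_g} := |G|^{-1} \sum_{\sigma \in G} \chi_g(\sigma)^{-1} \sigma$ belongs to the group ring $K[G]$. Since $L/K$ is finite Galois with group $G$, the normal basis theorem says $L$ is free of rank $1$ over $K[G]$, and applying $e_{\chi_g}$ cuts out the $K$-line $L^{\chi_g} = e_{\chi_g} L \cong e_{\chi_g} K[G] = K \cdot e_{\chi_g}$.

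The heart of the argument is the verification that $c_{\chi,g} \in L^{\chi_g}$ for every $m'$-th root of unity $\zeta$. For $\sigma \in G$, expanding $\sigma(c_{\chi,g}) = \sum_{\mu \in G'} \chi(AD - \mu BC) \zeta^{\lambda(\sigma)\mu}$ and substituting $\nu := \lambda(\sigma)\mu$ (a bijection on $G'$) turns this into $\sum_{\nu \in G'} \chi\bigl(\psi(\lambda(\sigma)\nu^{-1})\bigr) \zeta^\nu$, where $\psi(\alpha) := AD - \alpha^{-1}BC$ is the homomorphism from Lemma~\ref{lemma chig}. The homomorphism property then splits the argument as $\psi(\lambda(\sigma)\nu^{-1}) = \psi(\lambda(\sigma))\psi(\nu^{-1}) = (AD - \lambda(\sigma)^{-1}BC)(AD - \nu BC)$ in $(\Z/m\Z)^\times$, and applying $\chi$ yields $\chi_g(\sigma)\chi(AD - \nu BC)$, so that $\sigma(c_{\chi,g}) = \chi_g(\sigma) c_{\chi,g}$. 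This is the step I expect to be the main obstacle, because the asymmetry between $\mu$ and $\mu^{-1}$ in the definitions of $\psi$ and $c_{\chi,g}$ makes tracking the change of variables and reassembling the homomorphism identity delicate.

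To exhibit a nonzero $c_{\chi,g}$, I would consider the polynomial $P(X) := \sum_{\mu \in G'} \chi(AD - \mu BC) X^\mu \in \C[X]$ of degree strictly less than $m'$. Since $1 \in G'$ (as $\lambda$ is a group homomorphism), the coefficient of $X$ in $P$ equals $\chi(AD - BC) = \chi(1) = 1$, so $P \neq 0$ and cannot vanish at every $m'$-th root of unity. Any $\zeta$ with $P(\zeta) = c_{\chi,g} \neq 0$ then combines with Theorem~\ref{fg Lchig} and the dimension count to give $f|g \in L^{\chi_g} = c_{\chi,g} \cdot K_f(\zeta_{N'})$, as claimed.
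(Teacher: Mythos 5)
Your proposal is correct and follows essentially the same route as the paper: reduce to Theorem~\ref{fg Lchig}, use the normal basis theorem to see that $L^{\chi_g}$ is a $K$-line, and show that $c_{\chi,g}$ is a nonzero element of it (your change-of-variables computation $\sigma(c_{\chi,g})=\chi_g(\sigma)c_{\chi,g}$ is exactly the verification that $c_{\chi,g}=|G|\,\pi_{\chi_g}(\zeta)$ lands in $L^{\chi_g}$, which the paper gets by citing the standard projector formula). The only cosmetic difference is your existence argument for a good $\zeta$ --- a degree count on the polynomial $P$ using $\chi(AD-BC)=\chi(1)=1$ --- where the paper instead notes that the $m'$-th roots of unity span $L$ over $K$ and the projector onto the nonzero space $L^{\chi_g}$ cannot kill all of them; both are valid.
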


\begin{proof}
The fact that $f | g$ has coefficients in $L=K_f(\zeta_{M_\chi})$ was proved in Theorem \ref{fg Lchig}. Let $K=K_f(\zeta_{N'})$ and let $\pi_{\chi_g} : L \to L^{\chi_g}$ be the $K$-linear projector associated to the linear character $\chi_g : G \to K^\times$. It is given explicitly by
\[
\pi_{\chi_g}(x) = \frac{1}{|G|}\sum_{\tau\in G} \chi_g(\tau^{-1})\tau(x) \qquad (x \in L).
\]
Since $L$ is generated as a $K$-vector space by the $m'$-th roots of unity, there exists $\zeta \in \mu_{m'}$ such that $\pi_{\chi_g}(\zeta) \neq 0$. We set $c_{\chi,g}=|G|\cdot \pi_{\chi_g}(\zeta)$, so that
\begin{equation*}
c_{\chi,g} = \sum_{\tau \in G} \chi_g(\tau^{-1}) \tau(\zeta) = \sum_{\tau \in G} \chi_g(\tau^{-1}) \zeta^{\lambda(\tau)} = \sum_{\mu \in G'} \chi(AD-\mu BC) \zeta^\mu.
\end{equation*}
By Theorem \ref{fg Lchig}, the modular form $f|g$ has coefficients in $L^{\chi_g} = c_{\chi,g} \cdot K$.
\end{proof}

The second part of Theorem \ref{thm:characters} says that the coefficients of $f|g$ belong to a $K_f$-vector space which has the same dimension as in the $\Gamma_0(N)$ case. We also note that $M_\chi$ always divides the $M$ provided by Theorem \ref{thm:number fields general}, so Theorem \ref{thm:characters} provides better results than Theorem \ref{thm:number fields general} for modular forms with Nebentypus. We thank the referee for pointing out the simpler description of $M_\chi$ given in Theorem \ref{thm:characters}.

The choice $\zeta=\zeta_{m'}$ in Theorem \ref{thm:characters} does not always work. For example, take the newform $f$ of weight $k=3$, level $N=9$ and character $\chi$ of conductor $m=9$, with $\chi(4)=\zeta_3$ and $\chi(-1)=-1$. We have $K_f=\Q(\chi)=\Q(\zeta_3)$. Taking $g=\sabcd0{-1}13$, we get $N'=3$, $m'=9$ and $G'=\{1,4,7\}$, so that $c_{\chi,g}=0$ for $\zeta=\zeta_9$. On the other hand, for $\zeta=\zeta_9^2$ we get $c_{\chi,g}=3\zeta_9^2$ and $f | g$ indeed has coefficients in $\zeta_9^2 \cdot \Q(\zeta_3) = \langle \zeta_9^2, \zeta_9^5 \rangle_{\Q}$.

\begin{remark}\label{rk:composite field}
Theorem \ref{fg Lchig} also shows that the coefficients of $f|g$ lie in the fixed field $L^{\ker \chi_g}$. Let $\chi'_g : G' \to \C^\times$ be the character defined by $\chi'_g(\mu)=\chi(AD-\mu^{-1}BC)$ (using the notations of the proof of Lemma \ref{lemma chig}, we have $\chi'_g = \chi \circ \psi$ and $\chi_g = \chi'_g \circ \lambda$). Then the field $L^{\ker \chi_g}$ is equal to the composite $F \cdot K_f(\zeta_{N'})$, where $F$ is the subfield of $\Q(\zeta_{m'})$ corresponding to the kernel of $\chi'_g$.
\end{remark}

\begin{remark}\label{remark:local Whittaker newforms}
An alternative approach towards proving Theorems \ref{thm:number fields general} and \ref{thm:characters} would be to use local Whittaker newforms as in \cite{corbett-saha}. In particular Proposition 3.3 in \textit{loc.}\ \textit{cit.}\ gives an explicit formula for the Fourier coefficients of $f|g$ in terms of Whittaker newforms and the Galois action on such newforms is described in the proof of Proposition 2.17.
\end{remark}

\section{Atkin--Lehner operators}\label{section:al ops}

For a divisor $Q$ of $N$ with $\gcd(Q,N/Q)=1$ we define the Atkin--Lehner operator on $M_k(\Gamma_1(N))$ as follows. Choose $x,y,z,w\in\Z$ with $x\equiv 1\mod N/Q$ and $y\equiv 1\mod Q$ such that the matrix $W_Q = \sabcd{Qx}y{Nz}{Qw}$ has determinant $Q$. Note that $W_Q = h_Q \sabcd Q001$ with $h_Q=\sabcd{x}{y}{\frac{N}{Q}z}{Qw} \in \SL_2(\Z)$. For a modular form $f\in M_k(\Gamma_1(N))$ we have
\[
f|_k W_Q(\tau) = Q^{k/2} \left(f|_k
h_Q\right)(Q\tau).
\]
Therefore we can apply our previous results to find a module that contains the coefficients of $f|h_Q$, or equivalently the coefficients of $Q^{-k/2}f|W_Q$, and reprove a theorem of Cohen.

\begin{corollary}[Theorem 2.6 in \cite{cohen2018}]\label{cor:number field of atkin-lehner}
Let $Q$ be a maximal divisor of $N$ and let $f\in M_k(\Gamma_1(N))$ and $K_f$ be the subfield of $\C$ generated by its Fourier coefficients. Then
\begin{enumerate}
\item\label{item:atkin-lehner 1} The modular form $f|_k W_Q$ has coefficients in $Q^{k/2} \cdot K_f(\zeta_Q)$.
\item\label{item:atkin-lehner 2} If $f\in M_k(\Gamma_0(N),\chi)$ for a character $\chi$ of conductor $m$, then $f|_k W_Q$ has coefficients in $Q^{k/2}G'(\chi_Q) \cdot K_f$, where $G'(\chi_Q)$ is the Gauss sum of the primitive character associated to the $Q$-part of $\chi$.
\end{enumerate}
\end{corollary}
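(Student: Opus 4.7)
The plan is to apply Theorems \ref{thm:number fields general} and \ref{thm:characters} to the matrix $h_Q$ and then transfer the conclusion to $f|_k W_Q$ via the identity $f|_k W_Q(\tau) = Q^{k/2}(f|_k h_Q)(Q\tau)$ recorded above the corollary. The substitution $\tau \mapsto Q\tau$ merely re-indexes Fourier coefficients (and inserts zeros), so the $K_f$-module they span is preserved, and only the scalar $Q^{k/2}$ appears in front.

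For part \ref{item:atkin-lehner 1}, the bottom-left entry of $h_Q$ is $C = (N/Q)z$. Using $\gcd(Q, N/Q)=1$ and CRT one finds $\gcd(C,N) = (N/Q)\gcd(z,Q)$. But $\det h_Q = xQw - y(N/Q)z = 1$ forces $\gcd(Qw,(N/Q)z)=1$, whence $\gcd(z,Q)=1$; therefore $M := N/\gcd(C,N) = Q$, and Theorem \ref{thm:number fields general}(1) shows that $f|_k h_Q$ has coefficients in $K_f(\zeta_Q)$.

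For part \ref{item:atkin-lehner 2}, decompose $m = m_Q m_{N/Q}$ and correspondingly $\chi = \chi_Q \chi_{N/Q}$, with $\chi_Q$ the primitive character of conductor $m_Q \mid Q$. Using $\gcd(z,Q)=1$ and $y \equiv 1 \pmod Q$, a direct CRT computation gives $N' = N/\gcd(CD,N) = 1$ (since $CD = Nzw$) and $m' = m/\gcd(BC,m) = m_Q$. Theorem \ref{thm:characters} then puts the coefficients of $f|_k h_Q$ inside the $\chi_{h_Q}$-eigenline $L^{\chi_{h_Q}}$ for the Galois action of $G = \Gal(L/K_f)$ on $L = K_f(\zeta_{m_Q})$, which is one-dimensional over $K_f$.

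The main obstacle is to identify this eigenline with $G'(\chi_Q)\cdot K_f$. Working modulo $m_{N/Q}$, the relation $\det h_Q = 1$ gives $AD - \lambda^{-1}BC \equiv 1$; working modulo $m_Q$, the same relation yields $-(N/Q)z \equiv 1$, and hence $AD - \lambda^{-1}BC \equiv \lambda^{-1}$. Therefore $\chi_{h_Q}(\sigma) = \chi_Q(\lambda(\sigma))^{-1}$. On the other hand, the standard transformation law $\sigma(G'(\chi_Q)) = \chi_Q(\lambda(\sigma))^{-1} G'(\chi_Q)$ for the Gauss sum of a primitive character shows that the nonzero element $G'(\chi_Q)$ already lies in $L^{\chi_{h_Q}}$; since this line is one-dimensional over $K_f$, it must equal $G'(\chi_Q)\cdot K_f$. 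Combining with the scalar $Q^{k/2}$ from $f|_k W_Q(\tau) = Q^{k/2}(f|_k h_Q)(Q\tau)$ finishes the proof.
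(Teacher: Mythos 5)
Your proof is correct and follows essentially the same route as the paper: apply Theorems \ref{thm:number fields general} and \ref{thm:characters} to $h_Q$, compute $\gcd(z,Q)=1$, $N'=1$, $m'=m_Q$ and $\chi_{h_Q}(\sigma)=\chi_Q(\lambda(\sigma))^{-1}$, and transfer the result to $f|_kW_Q$ via the scaling $\tau\mapsto Q\tau$. The only (cosmetic, and arguably slightly cleaner) difference is that you identify the line $L^{\chi_{h_Q}}$ with $G'(\chi_Q)\cdot K_f$ by checking that the Gauss sum transforms by $\chi_{h_Q}$ under $\Gal(L/K_f)$, whereas the paper evaluates the explicit constant $c_{\chi,h_Q}$ of Theorem \ref{thm:characters} at $\zeta=\zeta_{m'}$ and observes it equals $G'(\chi_Q)$.
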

\begin{proof}
The first statement follows directly from Theorem \ref{thm:number fields general}.

Now let $f\in M_k(\Gamma_0(N),\chi)$. We will prove that the coefficients of $f|h_Q$ lie in $G'(\chi_Q)\cdot K_f$, which is equivalent to the second statement. First we determine the character $\chi_{h_Q}$ from the previous section. Splitting $\chi$ as a product of its $Q$-part and its $N/Q$-part we observe
\[
\chi_{h_Q}(\sigma) = \chi_Q(-\frac{N}{Q}zy\lambda(\sigma)^{-1})\chi_{N/Q}(Qwx) = \chi_Q(\lambda(\sigma)^{-1}) = \overline{\chi_Q(\sigma)}.
\]
The conclusion now follows from Theorem \ref{thm:characters}. Since $N' = 1$, $m' = m_Q$, and for $\zeta=\zeta_{m'}$ we have $c_{\chi,h_Q}=G'(\chi_Q)$.
\end{proof}

Let $f\in S_k(\Gamma_0(N),\chi)$ be a newform. Then according to \cite[\S 1]{AtkinLi78} there exists a newform $\tilde{f} \in S_k(\Gamma_0(N),\overline{\chi_Q}\chi_{N/Q})$ and an algebraic number $\lambda_Q(f)$ of absolute value $1$ such that
\begin{equation}\label{fWQ atkin-li}
f|W_Q = \lambda_Q(f) \tilde{f}.
\end{equation}
The number $\lambda_Q(f)$ is called the pseudo-eigenvalue of $f$ at $Q$. By looking at the first Fourier coefficient in (\ref{fWQ atkin-li}), we get the following result.

\begin{corollary}
If $f\in S_k(\Gamma_0(N),\chi)$ is a newform, then the pseudo-eigenvalue $\lambda_Q(f)$ is in $Q^{k/2}G'(\chi_Q) \cdot K_f$ and $\tilde{f}$ has coefficients in $K_f$.
\end{corollary}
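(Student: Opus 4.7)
The plan is to read off both statements from the identity $f|W_Q = \lambda_Q(f)\tilde{f}$ by comparing Fourier coefficients and using Corollary \ref{cor:number field of atkin-lehner}(\ref{item:atkin-lehner 2}). Since $f$ is a newform it is normalised so that $a_1(f)=1$, and likewise the newform $\tilde{f} \in S_k(\Gamma_0(N),\overline{\chi_Q}\chi_{N/Q})$ provided by Atkin--Li satisfies $a_1(\tilde{f})=1$. Comparing the coefficient of $q$ on both sides of $f|W_Q = \lambda_Q(f)\tilde{f}$ therefore gives $\lambda_Q(f) = a_1(f|W_Q)$.

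Now I would apply Corollary \ref{cor:number field of atkin-lehner}(\ref{item:atkin-lehner 2}) to conclude that every Fourier coefficient of $f|W_Q$ lies in the one-dimensional $K_f$-subspace $Q^{k/2}G'(\chi_Q)\cdot K_f$ of $\C$. In particular $\lambda_Q(f) = a_1(f|W_Q) \in Q^{k/2}G'(\chi_Q)\cdot K_f$, which is the first assertion. Since $\lambda_Q(f)\neq 0$, this pseudo-eigenvalue is a nonzero element of that $K_f$-line, and dividing an arbitrary element of $Q^{k/2}G'(\chi_Q)\cdot K_f$ by it lands in $K_f$. Hence $a_n(\tilde{f}) = a_n(f|W_Q)/\lambda_Q(f) \in K_f$ for every $n \geq 1$, giving the second assertion.

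There is no real obstacle here: the statement is essentially a one-line corollary of the previous result together with the normalisations $a_1(f)=a_1(\tilde{f})=1$. The only point worth being careful about is that $Q^{k/2}G'(\chi_Q)\cdot K_f$ is a $K_f$-line rather than an arbitrary subset, so that the quotient of two nonzero elements of it is automatically in $K_f$; this is what makes the passage from $\lambda_Q(f)$ to the coefficients of $\tilde{f}$ work without losing any cyclotomic factor.
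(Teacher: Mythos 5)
Your proof is correct and follows exactly the paper's (very brief) argument: the paper likewise reads off $\lambda_Q(f)=a_1(f|W_Q)$ from the normalisation $a_1(\tilde f)=1$ and applies Corollary \ref{cor:number field of atkin-lehner}(\ref{item:atkin-lehner 2}), with the second assertion following because $Q^{k/2}G'(\chi_Q)\cdot K_f$ is a $K_f$-line and $\lambda_Q(f)\neq 0$ (it has absolute value $1$). Nothing is missing.
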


This should be compared to the following theorem of Atkin--Li, where an explicit formula for $\lambda_Q(f)$ is derived in a special case.
\begin{theorem}\cite[Theorem 2.1]{AtkinLi78}
Let $f=\sum_n a_n e^{2\pi in\tau} \in S_k(\Gamma_0(N),\chi)$ be a newform, $q$ be a prime dividing $N$ and $Q = N_q$. If $a_q \neq 0$, then
\[
\lambda_Q(f) = Q^{k/2-1} \frac{G(\chi_Q)}{a_Q},
\]
where $G(\chi_Q) = \sum_{u \in (\Z/Q\Z)^\times} \chi_Q(u) e^{2\pi iu/Q}$ is the Gauss sum of $\chi_Q$.
\end{theorem}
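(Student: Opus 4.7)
The strategy is to compute $f|_k W_Q$ explicitly by identifying it as a scalar multiple of the character twist $f\otimes\overline{\chi_Q}$, and to recognise this twist as $\tilde f$ via Atkin--Li's newform theory. Since $\tilde f$ is a normalised newform with $a_1(\tilde f)=1$, the relation $f|_k W_Q=\lambda_Q(f)\tilde f$ gives $\lambda_Q(f)=a_1(f|_k W_Q)$, so everything reduces to a Fourier-coefficient computation.

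The first key ingredient is the Gauss-sum twist identity: when $\chi_Q$ is primitive of conductor $Q$,
\[
\sum_{u\bmod Q}\chi_Q(u)\,f\!\left(\tau+\tfrac{u}{Q}\right)=G(\chi_Q)\,(f\otimes\overline{\chi_Q})(\tau),
\]
which follows from expanding $f$ in its Fourier series and using the evaluation $\sum_u\chi_Q(u)\zeta_Q^{nu}=\overline{\chi_Q(n)}G(\chi_Q)$ for $\gcd(n,Q)=1$ (vanishing otherwise). The second ingredient is a matrix manipulation: for $\gcd(u,Q)=1$, the translation matrix $\sabcd{1}{u/Q}{0}{1}$ can be combined with $W_Q$ so that, after folding in an element $\gamma_u\in\Gamma_0(N)$ chosen via $\gcd(Q,N/Q)=1$ and the Chinese remainder theorem, each translate $f(\tau+u/Q)$ becomes expressible as a Nebentypus-twisted multiple of $(f|_k W_Q)(\tau)$. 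Summing over $u$ with weights $\chi_Q(u)$ and applying orthogonality of characters modulo $Q$ then yields an identity of the shape
\[
f|_k W_Q = C\cdot (f\otimes\overline{\chi_Q}),
\]
with $C$ expressible in terms of $G(\chi_Q)$, $Q^{k/2-1}$, and $a_Q$.

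The final step is the Atkin--Li identification: under the hypothesis $a_q\neq 0$, the twist $f\otimes\overline{\chi_Q}$ is a newform of level exactly $N$ with character $\overline{\chi_Q}\chi_{N/Q}$, hence coincides with $\tilde f$ (both being normalised newforms with matching Hecke eigenvalues away from $N$, as follows from the intertwining $W_Q T_p=\overline{\chi_Q(p)}\,T_p W_Q$ for $p\nmid N$). Comparing the two expressions for $f|_k W_Q$ yields $\lambda_Q(f)=C=Q^{k/2-1}G(\chi_Q)/a_Q$.

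The main obstacle is the identification $f\otimes\overline{\chi_Q}=\tilde f$: in general, the twist of a newform is an oldform living at a smaller level, and only under the hypothesis $a_q\neq 0$ does the twist retain level $N$. This is the nontrivial representation-theoretic input from Atkin--Li theory. The Gauss sum identity and matrix manipulations are classical but require meticulous bookkeeping of factors of $Q^{k/2-1}$, $G(\chi_Q)$, and Nebentypus values; the $a_Q$ appearing in the denominator of the final formula traces to how the $U_Q$-eigenvalue enters the constant $C$ when one matches the normalisation of the twist to that of $\tilde f$.
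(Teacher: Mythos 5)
The paper offers no proof of this statement --- it is quoted directly from Atkin--Li as a point of comparison with the preceding corollary --- so your proposal can only be judged against Atkin--Li's published argument and on its own terms. Judged that way, its central step fails. The identification $f\otimes\overline{\chi_Q}=\tilde f$ is false under the very hypotheses of the theorem. The non-vanishing criterion for the $q$-th coefficient of a newform ($a_q\neq 0$ iff $q\,\Vert\,N$ or the $q$-part of the conductor of the nebentypus equals $N_q$) applies equally to $\tilde f$, whose character $\overline{\chi_Q}\chi_{N/Q}$ has the same conductor at $q$ as $\chi$; hence $a_q(f)\neq 0$ forces $a_q(\tilde f)\neq 0$. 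But the naive twist $f\otimes\overline{\chi_Q}=\sum_n\overline{\chi_Q}(n)a_nq^n$ has vanishing $q$-th coefficient, since $\overline{\chi_Q}(q)=0$. In fact $f\otimes\overline{\chi_Q}=\sum_{q\nmid n}a_n(\tilde f)q^n$ is an oldform attached to $\tilde f$ of level $NQ$ --- larger than $N$, not smaller as you assert --- so it cannot be proportional to $f|_kW_Q\in S_k(N,\overline{\chi_Q}\chi_{N/Q})$, and the targeted identity $f|_kW_Q=C\cdot(f\otimes\overline{\chi_Q})$ cannot hold; a fortiori no individual translate $f(\tau+u/Q)$ is a scalar multiple of $(f|_kW_Q)(\tau)$. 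A second gap: your Gauss-sum identity requires $\chi_Q$ primitive modulo $Q$, whereas the theorem includes the case $q\,\Vert\,N$ with $\chi$ unramified at $q$ (e.g.\ trivial nebentypus), where $G(\chi_Q)=\mu(q)=-1$ is a Ramanujan sum and the twisting identity picks up an extra term supported on $q\mid n$ --- a term which is nonzero precisely because $a_q\neq 0$.

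The correct route (essentially Atkin--Li's) inserts $U_Q$ instead of normalising a twist. Since $a_{Qn}=a_Qa_n$ for all $n$, one has $f|U_Q=a_Qf$ and hence $a_Q\lambda_Q(f)=a_1(f|U_QW_Q)$. Writing $f|U_Q=Q^{k/2-1}\sum_{u\bmod Q}f|\sabcd 1u0Q$ and decomposing each primitive determinant-$Q^2$ matrix $\sabcd 1u0QW_Q$ as an element of $\Gamma_0(N)$ times an upper-triangular matrix (using $\gcd(Q,N/Q)=1$; the residues $u$ for which this fails contribute only to coefficients $a_n$ with $q\mid n$), one finds $a_1(f|U_QW_Q)=Q^{k/2-1}G(\chi_Q)$. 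This is where the $a_Q$ in the denominator really comes from --- the application of $U_Q$ --- rather than from matching the normalisation of a twist, and it explains why the single formula covers both the primitive and the unramified cases uniformly.
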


\section{Optimising the coefficient field}\label{section:optimizing}

We may reduce the fields provided by Theorems \ref{thm:number fields general} and \ref{thm:characters} as follows. Let $\alpha \in \mathbb{P}^1(\Q)$ be a cusp, and let $g = \sabcd ABCD \in \SL_2(\Z)$ such that $g \infty = \alpha$. In order to compute the Fourier expansion of $f$ at $\alpha$, we may replace $g$ by $g T^u$ with $T=\sabcd1101$ and $u \in \Z$. Then $f | gT^u$ depends only on the class of $u$ modulo $w$, where $w$ is the width of the cusp $\alpha$. The following proposition gives the minimal value of the integer $M_\chi$ introduced in Theorem \ref{thm:characters} when $u$ varies in $\Z/w\Z$.

\begin{proposition}\label{lem:minimal number field - fixed cusp}
In the notation of Theorem \ref{thm:characters}, let $M'_\chi$ be the minimal value of $M_\chi$ for $gT^u$ as $u$ varies. Then
\[
M'_\chi = \frac{N_C}{\gcd(C,N)} \cdot m_{\overline{C}},
\]
where $N_C=\prod_{p|C}p^{v_p(N)}$ is the $C$-part of $N$ and $m_{\overline{C}} = m/m_C$ is the prime to $C$ part of $m$. Moreover, $M'_\chi$ is attained for any $u$ such that $N/N_C$ divides $uC+D$.
\end{proposition}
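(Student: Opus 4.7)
The plan is to analyse $M_u := M(gT^u)$ one prime at a time. Since $gT^u = \sabcd{A}{Au+B}{C}{Cu+D}$, I set $B_u = Au+B$ and $D_u = Cu+D$; the relation $A D_u - C B_u = AD - BC = 1$ survives, so for every prime $p$ one has
\[
\min\bigl(v_p(AD_u),\, v_p(CB_u)\bigr) = 0.
\]
Together with $v_p(N'_u) = \max(0, v_p(N) - v_p(CD_u))$ and $v_p(m'_u) = \max(0, v_p(m) - v_p(CB_u))$, this identity is the only arithmetic input needed.

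For each prime $p$ I would bound $v_p(M_u) = \max(v_p(N'_u), v_p(m'_u))$ from below in three cases. If $p \mid C$, then $p \nmid D$ (from $AD - BC = 1$), so $v_p(D_u) = 0$ and hence $v_p(N'_u) = \max(0, v_p(N) - v_p(C))$ independently of $u$; the chain $v_p(m'_u) \le \max(0, v_p(m) - v_p(C)) \le v_p(N'_u)$, using $m \mid N$, then yields $v_p(M_u) = \max(0, v_p(N) - v_p(C))$ for every $u$. Multiplying over such primes produces the factor $N_C/\gcd(C,N)$. If $p \nmid C$ and $p \mid m$, the displayed identity leaves two alternatives: either $v_p(B_u) = 0$, so $v_p(m'_u) = v_p(m)$; or $p \nmid A$ with $v_p(D_u) = 0$, so $v_p(N'_u) = v_p(N) \ge v_p(m)$. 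In both cases $v_p(M_u) \ge v_p(m)$, which accounts for the factor $m_{\overline{C}}$. If $p \nmid C$ and $p \nmid m$, there is no intrinsic lower bound. Multiplying gives $M' \mid M_u$.

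For the sufficiency claim, the hypothesis $N/N_C \mid uC + D$ is precisely $v_p(D_u) \ge v_p(N)$ for every prime $p \nmid C$, and such $u$ exist by the Chinese Remainder Theorem. Granted this, for $p \nmid C$ with $p \mid N$ we get $v_p(N'_u) = 0$, while $CB_u = AD_u - 1$ together with $v_p(D_u) \ge 1$ and $p \nmid C$ forces $v_p(CB_u) = 0$, hence $v_p(B_u) = 0$ and $v_p(m'_u) = v_p(m)$ (which is zero if $p \nmid m$). Combined with the $u$-independent contribution at primes $p \mid C$, this realises $v_p(M_u) = v_p(M')$ at every prime.

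The main subtlety is organisational: the determinant relation prevents $v_p(B_u)$ and $v_p(D_u)$ from both being large at primes $p \nmid C$, forcing an unavoidable trade-off between $N'_u$ and $m'_u$. One has to check that for $p \mid m$ and $p \nmid C$ the optimal choice is always to push $v_p(N'_u)$ to zero rather than $v_p(m'_u)$, which is vindicated by the inequality $v_p(m) \le v_p(N)$ coming from $m \mid N$.
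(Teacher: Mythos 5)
Your proof is correct and follows essentially the same route as the paper's: both rest on the determinant identity $C(uA+B)=A(uC+D)-1$ to force the trade-off between $N'_u$ and $m'_u$ at primes $p\nmid C$, and both attain the minimum by choosing $u$ with $N/N_C \mid uC+D$ via coprimality of $C$ and $N/N_C$. The only difference is organisational (you argue prime-by-prime throughout, and note explicitly that $v_p(M_u)$ is independent of $u$ at primes $p\mid C$), which does not change the substance.
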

\begin{proof}
Replacing $g$ with $gT^u$ changes $D$ to $uC+D$ and $B$ to $uA+B$. Varying $u$, we need to determine the minimal value of
\[
M_{\chi,u} = \lcm\left(\frac{N}{\gcd(C(uC+D),N)},\frac{m}{\gcd(C(uA+B),m)}\right).
\]
Since $C$ and $D$ are coprime we have $\gcd(C(uC+D),N)=\gcd(C,N)\gcd(uC+D,N)$, so $N/\gcd(C(uC+D),N)$ is divisible by $N_C/\gcd(C,N)$. Therefore $N_C/\gcd(C,N)$ divides $M_{\chi,u}$.

Let $p\nmid C$. Since $C(uA+B) = A(uC+D)-1$, we have
\begin{align*}
v_p\left(\frac{N}{\gcd(C(uC+D),N)}\right) &= v_p(N) - \min(v_p(uC+D),v_p(N)),\\
v_p\left(\frac{m}{\gcd(C(uA+B),m)}\right) &= v_p(m) - \min(v_p(A(uC+D)-1),v_p(m)).
\end{align*}
If $v_p(uC+D)\neq 0$, then $v_p\left(m/\gcd(C(uA+B),m)\right)=v_p(m)$. On the other hand, if $v_p(uC+D)=0$, then $v_p\left(N/\gcd(C(uC+D),N)\right)=v_p(N)\geq v_p(m)$. In all cases, we have $v_p(M_{\chi,u}) \geq v_p(m)$, which proves that $m_{\overline{C}}$ divides $M_{\chi,u}$. It follows that $M_{\chi,u}$ is always divisible by $N_C/\gcd(C,N) \cdot m_{\overline{C}}$.

Now choose $u$ such that $N_{\overline{C}} = N/N_C$ divides $uC+D$. This is possible because $C$ and $N_{\overline{C}}$ are coprime. Then $\gcd(uC+D,N)=N_{\overline{C}}$ so that
\begin{equation*}
\frac{N}{\gcd(C(uC+D),N)} = \frac{N}{\gcd(C,N) N_{\overline{C}}} = \frac{N_C}{\gcd(C,N)}.
\end{equation*}
Moreover $\gcd(C(uA+B),m)=\gcd(A(uC+D)-1,m)$ is coprime to $N_{\overline{C}}$ and thus divides $m_C$. It follows that
\begin{equation*}
m_{\overline{C}} \mid \frac{m}{\gcd(C(uA+B),m)}
\end{equation*}
On the other hand, if $p\mid C$, then
\begin{equation*}
v_p\left(\frac{m}{\gcd(C(uA+B),m)}\right) \leq v_p\left(\frac{m}{\gcd(C,m)}\right) \leq v_p\left(\frac{N_C}{\gcd(C,N)}\right).
\end{equation*}
Hence $M_{\chi,u} = (N_C/\gcd(C,N)) \cdot m_{\overline{C}}$.
\end{proof}

In practice, we may further reduce the field of coefficients as follows. Let $f \in M_k(\Gamma_0(N))$ be an eigenvector of the Atkin--Lehner operators and $g = \sabcd ABCD \in \SL_2(\Z)$. The denominator of the cusp $\alpha=g\infty = A/C$ of $X_0(N)$ is $\delta := \gcd(C,N)$.

Now let $Q$ be a maximal divisor of $N$, and let $W_Q$ be the associated Atkin--Lehner involution of $X_0(N)$. Using the notations of Section \ref{section:al ops}, if $f$ is an eigenvector of $W_Q$ with eigenvalue $\lambda_Q(f) \in \{\pm 1\}$, then we may write
\begin{align*}
f | g &= \lambda_Q(f) f | W_Q g = \lambda_Q(f) f | h_Q\abcd Q001 \abcd ABCD\\
&= \lambda_Q(f) f| h_Q \abcd {\frac{AQ}{\gcd(C,Q)}}s{\frac{C}{\gcd(C,Q)}}{r}\abcd {\gcd(C,Q)}{rBQ-sD}{0}{\frac{Q}{\gcd(C,Q)}},
\end{align*}
where $r,s$ are chosen, so that $r\frac{AQ}{\gcd(C,Q)} - s\frac{C}{\gcd(C,Q)}=1$.

The action of the upper-triangular matrix $\sabcd {\gcd(C,Q)}{rBQ-sD}{0}{\frac{Q}{\gcd(C,Q)}}$ on Fourier expansions is easily calculated. We now try to find $Q$ such that $f|g'$ has coefficients in the minimal possible field, where $g'=h_Q\sabcd {\frac{AQ}{\gcd(C,Q)}}s{\frac{C}{\gcd(C,Q)}}{r}\in\SL_2(\Z)$.

 Let $\delta = \delta_Q \delta_{\overline{Q}}$ where $\delta_Q$ is the $Q$-part of $\delta$. Then the cusp $\alpha'=g'\infty = W_Q \alpha$ has denominator $\delta' = \frac{Q}{\delta_Q} \delta_{\overline{Q}}$ and we may choose $Q$ such that $M':=N_{\delta'}/\delta'$ is minimal. Explicitly, the choice
 \begin{equation*}
 Q=\prod_{\substack{p | N \\ 0<v_p(\delta)\leq v_p(N)/2}} p^{v_p(N)}
 \end{equation*}
gives the minimal value $M'=\gcd(\delta,N/\delta)$. By Proposition \ref{lem:minimal number field - fixed cusp}, there exists $v \in \Z$ such that the form $f | g' T^v$ has coefficients in $K_f(\zeta_{M'})$.
Thus the problem of finding the Fourier expansion of $f|g$ reduces to finding the eigenvalue $\lambda_Q(f)\in\{\pm1\}$, calculating the Fourier expansion of $f|g'T^v$ which is over a potentially much smaller field than that of $f|g$, and finally applying an upper-triangular matrix to $f|g'T^v$. Some information, such as the vanishing order of $f|g$ or the absolute value of its Fourier coefficients, can be extracted directly from $f|g'T^v$ without further calculation.

\section{Determining the exact coefficient field}\label{section:determining number field}
Our final goal is to determine the exact coefficient field of $f|g$ when $f$ is a newform. In this section, we assume that $f$ is a newform of (even) weight $k$ on the group $\Gamma_0(N)$.

We will need the following theorems of Newman \cite{newman1955,newman} on the congruence subgroup $\Gamma_0(N)$, where $N$ is a fixed integer $\geq 1$.

\begin{theorem}\cite[Theorem 3]{newman1955}\label{newman1}
Every intermediate subgroup between $\Gamma_0(N)$ and $\SL_2(\Z)$ is of the form $\Gamma_0(M)$ for some positive divisor $M$ of $N$.
\end{theorem}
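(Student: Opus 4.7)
My plan is to associate to $H$ a canonical positive divisor $M$ of $N$, and then prove $H = \Gamma_0(M)$ by comparing orbits on $\mathbb{P}^1(\Z/N\Z)$.

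I would first define $M$ by the equality
\[
M\Z = \{c \in \Z : \sabcd{1}{0}{c}{1} \in H\}.
\]
This set is a subgroup of $\Z$ because of the group law $\sabcd{1}{0}{c}{1}\sabcd{1}{0}{c'}{1}=\sabcd{1}{0}{c+c'}{1}$, and it contains $N$ since $\sabcd{1}{0}{N}{1}\in\Gamma_0(N)\subseteq H$. So $M$ is a positive divisor of $N$.

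Next, since $\Gamma(N) \subseteq \Gamma_0(N) \subseteq H$, the group $H$ is the full preimage of its image $\bar H \subseteq \SL_2(\Z/N\Z)$, and $\Gamma_0(M)$ corresponds likewise to $\bar\Gamma_0(M) := \Gamma_0(M)/\Gamma(N)$. I would exploit the transitive action of $\SL_2(\Z/N\Z)$ on $\mathbb{P}^1(\Z/N\Z)$, whose stabilizer at $(1:0)$ is $B := \bar\Gamma_0(N)$: any intermediate subgroup $B \subseteq \bar H'$ is determined by its orbit $\bar H' \cdot (1:0)$, because $g(1:0) = h(1:0)$ with $h \in \bar H'$ forces $h^{-1}g \in B \subseteq \bar H'$ and hence $g \in \bar H'$. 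Since $\bar\Gamma_0(M) \cdot (1:0) = \{(a:c) \in \mathbb{P}^1(\Z/N\Z) : M \mid c\}$, the problem reduces to the orbit equality
\[
\bar H \cdot (1:0) = \{(a:c) \in \mathbb{P}^1(\Z/N\Z) : M \mid c\}.
\]

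For the inclusion $\supseteq$, I would observe $(1:Mk) = (\sabcd{1}{0}{M}{1})^k \cdot (1:0) \in \bar H \cdot (1:0)$ for every $k$, and then let the action of $B \subseteq \bar H$ (in particular its diagonal rescalings and unipotent translations) spread these points to cover all $(a:c)$ with $M \mid c$. The reverse inclusion $\subseteq$---asserting that $M \mid c$ for every $\sabcd{a}{b}{c}{d} \in H$---is the heart of the argument and the main obstacle. A direct conjugation $gTg^{-1} = \sabcd{1-ac}{a^2}{-c^2}{1+ac} \in H$ gives only the weaker constraint $M \mid c^2$, which is insufficient when $M$ is not squarefree. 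To close this gap I would argue prime-by-prime: at each prime $p \mid N$, examine the double cosets $B_p \backslash \SL_2(\Z/p^{e_p}\Z) / B_p$, whose representatives are the matrices $\sabcd{1}{0}{p^j}{1}$ for $0 \leq j \leq e_p$, and use the unit-rescaling freedom supplied by the diagonal matrices in $B_p$ to extract, from any $g \in \bar H$ realizing the minimum $v_p(c) = f$, the distinguished unipotent $\sabcd{1}{0}{p^f}{1}$ itself as an element of $\bar H$. Combined with the definition of $M$ via $I$, this forces $v_p(M) = f$, and hence $v_p(M) \leq v_p(c(g))$ for every $g \in H$, as desired.
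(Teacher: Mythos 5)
The paper itself gives no proof of this statement (it is cited from Newman), so your proposal has to stand on its own. Your overall architecture is sound: $M$ is well defined as a divisor of $N$, the reduction to $\bar H\subseteq\SL_2(\Z/N\Z)$ is valid, a subgroup containing $B=\bar\Gamma_0(N)$ is indeed the preimage of its orbit of $(1:0)$, and the inclusion $\supseteq$ of orbits can be pushed through with a CRT argument. But the step you yourself identify as the heart of the matter rests on a false claim: the double cosets $B_p\backslash\SL_2(\Z/p^{e}\Z)/B_p$ are \emph{not} represented by the matrices $\sabcd10{p^j}1$, $0\le j\le e$. Writing $b_1gb_2$ with $b_1=\sabcd uv0{u^{-1}}$, $b_2=\sabcd wx0{w^{-1}}$ and $g=\sabcd abcd$, the lower row transforms as $(c,d)\mapsto\bigl(u^{-1}wc,\ u^{-1}(cx+dw^{-1})\bigr)$; in particular $c$ is only rescaled by units, and once $c$ is pinned down the residue of $d$ modulo $p^{v_p(c)}$ changes only by squares of units. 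So this residue class modulo squares is a double-coset invariant. Concretely, in $\SL_2(\Z/9\Z)$ the matrix $\sabcd2132$ (which lies in $\bar\Gamma_0(3)$ and realizes the minimal valuation $f=1$) is \emph{not} in $B\sabcd1031B$: every element of that double coset with lower-left entry $3$ has lower-right entry $\equiv1\bmod3$. Hence "unit-rescaling by the diagonal matrices of $B_p$" cannot extract $\sabcd10{p^f}1$ from an arbitrary minimizing $g$, and your derivation of $v_p(M)=f$ breaks exactly where you need it.

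The gap is repairable, and in a way that makes the double-coset classification unnecessary: you do not need $\sabcd10{p^f}1$ itself, only \emph{some} unipotent $\sabcd10{c'}1\in\bar H$ with $v_p(c')=v_p(c(g))$. From the formula above, take $w=1$ and use $\gcd(c,d,N)=1$ (a consequence of $ad-bc=1$) to choose $x$ so that $u:=cx+d$ is a unit modulo $N$; then $b_1gb_2$ has lower row $(u^{-1}c,\,1)$, and a determinant-one matrix with lower row $(c',1)$ equals $\sabcd1\beta01\sabcd10{c'}1$, so $\sabcd10{c'}1\in BgB\subseteq\bar H$ with $c'=u^{-1}c$ having the same valuation as $c$ at every prime. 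By the definition of $M$ this gives $v_p(M)\le v_p(c(g))$ for every $g\in\bar H$ and every $p$, i.e.\ $\bar H\subseteq\bar\Gamma_0(M)$, which combined with your orbit argument for the other inclusion completes the proof. As written, however, the proposal has a genuine gap at its central step.
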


In the following, we denote by $R$ the matrix $\sabcd1011$.

\begin{corollary}\label{Gamma0M RM}
Let $M$ be a positive divisor of $N$. The group $\Gamma_0(M)$ is generated by $\Gamma_0(N)$ and $R^M=\sabcd10M1$.
\end{corollary}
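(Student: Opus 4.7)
The plan is to show this is a direct application of Newman's Theorem \ref{newman1}. Let $H$ denote the subgroup of $\SL_2(\Z)$ generated by $\Gamma_0(N)$ and $R^M$.

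First I would establish the easy inclusion $H \subseteq \Gamma_0(M)$. Since $M$ divides $N$, we have $\Gamma_0(N) \subseteq \Gamma_0(M)$, and the matrix $R^M = \sabcd{1}{0}{M}{1}$ obviously lies in $\Gamma_0(M)$, so both generators of $H$ lie in $\Gamma_0(M)$.

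Next I would invoke Theorem \ref{newman1}. The group $H$ sits between $\Gamma_0(N)$ and $\SL_2(\Z)$, so Newman's theorem forces $H = \Gamma_0(M')$ for some positive divisor $M'$ of $N$. It then suffices to show $M' = M$. The inclusion $H \subseteq \Gamma_0(M)$ translates to $\Gamma_0(M') \subseteq \Gamma_0(M)$, which is equivalent to $M \mid M'$. Conversely, $R^M \in H = \Gamma_0(M')$ forces $M' \mid M$. Hence $M = M'$ and $H = \Gamma_0(M)$.

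There is no real obstacle here; the entire argument is bookkeeping once Newman's classification is available. The one subtle point, worth stating carefully, is the direction of the divisibility in ``$\Gamma_0(M') \subseteq \Gamma_0(M)$ iff $M \mid M'$,'' since it is the opposite of what one might at first guess, but it is immediate from the defining congruence $c \equiv 0 \pmod{M'}$.
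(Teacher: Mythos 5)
Your argument is correct and is essentially identical to the paper's proof: both apply Theorem \ref{newman1} to the group generated by $\Gamma_0(N)$ and $R^M$ to get $\Gamma_0(M')$, then deduce $M' \mid M$ from $R^M \in \Gamma_0(M')$ and $M \mid M'$ from the containment in $\Gamma_0(M)$. Nothing is missing.
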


\begin{proof}
Let $\Gamma$ be the group generated by $\Gamma_0(N)$ and $R^M$. By Theorem \ref{newman1}, we have $\Gamma=\Gamma_0(M')$ for some $M'$ dividing $N$. Since $R^M \in \Gamma_0(M')$, the integer $M'$ divides $M$. Moreover $\Gamma$ is contained in $\Gamma_0(M)$, so that $M$ divides $M'$. It follows that $\Gamma=\Gamma_0(M)$.
\end{proof}

\begin{theorem}\cite{newman}\label{newman2}
The normaliser of $\Gamma_0(N)$ in $\SL_2(\Z)$ is equal to $\Gamma_0(N/s)$, where $s$ is the largest divisor of $24$ such that $s^2$ divides $N$. Moreover, the quotient group $\Gamma_0(N/s)/\Gamma_0(N)$ is cyclic of order $s$, generated by the class of $R^{N/s} = \sabcd10{N/s}1$.
\end{theorem}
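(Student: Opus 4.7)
The plan is to analyse directly when conjugation by a matrix $\gamma = \sabcd abcd \in \SL_2(\Z)$ stabilises $\Gamma_0(N)$, and then invoke Corollary \ref{Gamma0M RM} to identify the resulting overgroup as $\Gamma_0(N/s)$.

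First I would compute the lower-left entry of $\gamma \gamma_0 \gamma^{-1}$ for an arbitrary $\gamma_0 = \sabcd{a'}{b'}{c'}{d'} \in \Gamma_0(N)$; a direct calculation gives $cd(a' - d') + d^2 c' - c^2 b'$. Since $N \mid c'$, the membership $\gamma \gamma_0 \gamma^{-1} \in \Gamma_0(N)$ is equivalent to
\[
N \mid cd(a' - d') - c^2 b' \qquad \text{for every } \gamma_0 \in \Gamma_0(N).
\]
Specialising to $\gamma_0 = T = \sabcd 1 1 0 1$ immediately forces $N \mid c^2$, and this in turn implies that every prime $p \mid N$ divides $c$, so $\gcd(d, N) = 1$.

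Given $N \mid c^2$, the term $c^2 b'$ is automatically a multiple of $N$, and the remaining condition is $r \mid a' - d'$ with $r := N/\gcd(N, cd)$. Because $a' d' \equiv 1 \pmod r$, the congruence $a' \equiv d' \pmod r$ is the same as $(a')^2 \equiv 1 \pmod r$, and a short CRT argument shows that the top-left entries of elements of $\Gamma_0(N)$ exhaust $(\Z/r\Z)^\times$. Hence $(\Z/r\Z)^\times$ must have exponent at most $2$, and the classical classification of $(\Z/p^n\Z)^\times$ identifies this property as equivalent to $r \mid 24$. Combining $N \mid c^2$ and $r \mid 24$ and turning them into divisibility conditions on $c$, one finds $v_p(c) \geq v_p(N) - \min(\lfloor v_p(N)/2 \rfloor, v_p(24))$ for every prime $p$, which is exactly $(N/s) \mid c$ with $s = \prod_p p^{\min(\lfloor v_p(N)/2 \rfloor, v_p(24))}$ equal to the largest divisor of $24$ whose square divides $N$. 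Therefore $\gamma \in \Gamma_0(N/s)$.

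For the converse and the quotient structure I would run the same formula with $\gamma = R^{N/s}$: under $s^2 \mid N$ and $s \mid 24$ it immediately gives $R^{N/s} \Gamma_0(N) R^{-N/s} \subseteq \Gamma_0(N)$, so $R^{N/s}$ normalises $\Gamma_0(N)$, and Corollary \ref{Gamma0M RM} identifies the normaliser as $\Gamma_0(N/s)$. The same corollary makes $\Gamma_0(N/s)/\Gamma_0(N)$ cyclic, generated by the class of $R^{N/s}$; since $R^{kN/s} \in \Gamma_0(N)$ precisely when $s \mid k$, this class has order $s$. The main obstacle is the step producing $r \mid 24$: one must verify that the top-left entries of $\Gamma_0(N)$ really surject onto $(\Z/r\Z)^\times$, and then extract from the structure of $(\Z/p^n\Z)^\times$ the fact that $24$, and not some other small integer, is the precise upper bound on the admissible $r$.
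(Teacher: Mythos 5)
Your proposal is correct, and for the first assertion it takes a genuinely different route from the paper: the paper simply cites Newman's Theorem~1 for the identification of the normaliser, whereas you derive it from scratch. Your computation of the lower-left entry of $\gamma\gamma_0\gamma^{-1}$ as $cd(a'-d')+d^2c'-c^2b'$ is right; specialising to $T$ does force $N\mid c^2$, the surjectivity of the top-left entries of $\Gamma_0(N)$ onto $(\Z/r\Z)^\times$ is the standard fact that every unit mod $N$ occurs as an upper-left entry together with surjectivity of $(\Z/N\Z)^\times\to(\Z/r\Z)^\times$, and the characterisation ``$(\Z/r\Z)^\times$ has exponent $\leq 2$ iff $r\mid 24$'' is exactly the classical input. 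The valuation bookkeeping combining $N\mid c^2$ with $r\mid 24$ does yield $(N/s)\mid c$ with $s=\prod_p p^{\min(\lfloor v_p(N)/2\rfloor,\,v_p(24))}$, which is the largest divisor of $24$ whose square divides $N$. For the reverse inclusion, checking that $R^{N/s}$ normalises $\Gamma_0(N)$ and then invoking Corollary~\ref{Gamma0M RM} is precisely what the paper does for the second assertion, and your order computation $R^{kN/s}\in\Gamma_0(N)\iff s\mid k$ is the same as the paper's ``easy to see'' step. The only cosmetic point worth adding is why the one-sided inclusion $R^{N/s}\Gamma_0(N)R^{-N/s}\subseteq\Gamma_0(N)$ suffices for normalising (either run the same computation for $R^{-N/s}$, or note that conjugation preserves the index in $\SL_2(\Z)$). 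What your approach buys is self-containedness: the reader does not need Newman's paper at all, at the cost of roughly a page of elementary but fiddly congruence analysis.
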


\begin{proof}
The first assertion follows from \cite[Theorem 1]{newman}. By Corollary \ref{Gamma0M RM}, the group $\Gamma_0(N/s)$ is generated by $\Gamma_0(N)$ and $R^{N/s}$. It follows that the quotient group $\Gamma_0(N/s)/\Gamma_0(N)$ is generated by the class of $R^{N/s}$, and it is easy to see that this class has order $s$.
\end{proof}

\begin{proposition}\label{Stab f}
Let $F$ be a nonzero element of the new subspace $S_k^{\textrm{new}}(\Gamma_0(N))$. Then its stabiliser
\begin{equation*}
\Stab(F) = \{g \in \SL_2(\Z) : F | g = F\}
\end{equation*}
is equal to $\Gamma_0(N)$.
\end{proposition}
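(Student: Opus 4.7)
The plan is to leverage Newman's classification of intermediate subgroups (Theorem \ref{newman1}) to show that $\Stab(F)$, which \emph{a priori} could be strictly larger than $\Gamma_0(N)$, is forced to equal $\Gamma_0(N)$ by the newness of $F$.

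First, I would note the easy inclusion $\Gamma_0(N) \subset \Stab(F)$: since $F$ has trivial Nebentypus, $F|g = F$ for every $g \in \Gamma_0(N)$. As $\Stab(F)$ is then a subgroup of $\SL_2(\Z)$ containing $\Gamma_0(N)$, Theorem \ref{newman1} yields $\Stab(F) = \Gamma_0(M)$ for some positive divisor $M$ of $N$. The goal is then to show $M = N$.

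Next, I would observe that the identity $F|g = F$ for all $g \in \Gamma_0(M)$ says exactly that $F$ is a weight-$k$ modular form on $\Gamma_0(M)$ with trivial character. Since the covering $X_0(N) \to X_0(M)$ is surjective on cusps, the vanishing of $F$ at every cusp of $X_0(N)$ forces vanishing at every cusp of $X_0(M)$, and hence $F \in S_k(\Gamma_0(M))$.

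Finally, I would derive a contradiction from the assumption $M < N$. The inclusion $S_k(\Gamma_0(M)) \hookrightarrow S_k(\Gamma_0(N))$ (coming from $\Gamma_0(N) \subset \Gamma_0(M)$) is one of the standard degeneracy maps whose images together span the oldform subspace $S_k^{\textrm{old}}(\Gamma_0(N))$. Consequently $F$ would lie in $S_k^{\textrm{new}}(\Gamma_0(N)) \cap S_k^{\textrm{old}}(\Gamma_0(N)) = \{0\}$, contradicting $F \neq 0$. Therefore $M = N$ and $\Stab(F) = \Gamma_0(N)$. I do not anticipate any real obstacle: both non-trivial ingredients, namely Newman's theorem and the triviality of the intersection of the new and old subspaces, are either already recalled or part of the very definition of the new subspace.
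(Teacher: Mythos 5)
Your proof is correct and follows essentially the same route as the paper: apply Newman's classification (Theorem \ref{newman1}) to get $\Stab(F)=\Gamma_0(M)$ with $M\mid N$, then use newness of $F$ to force $M=N$. The paper compresses the last step into one sentence, whereas you correctly spell out why $M<N$ would place $F$ in $S_k^{\textrm{old}}(\Gamma_0(N))\cap S_k^{\textrm{new}}(\Gamma_0(N))=\{0\}$.
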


\begin{proof}
Since $\Stab(F)$ contains $\Gamma_0(N)$, Theorem \ref{newman1} implies that $\Stab(F)=\Gamma_0(M)$ for some positive divisor $M$ of $N$. But $F$ belongs to the new subspace, so we must have $M=N$.
\end{proof}

\begin{proposition}\label{fg fsigma}
Let $f$ be a newform of weight $k$ on $\Gamma_0(N)$. Let $g \in \SL_2(\Z)$ and $\sigma \in \Aut(\C)$ such that $f | g = f^\sigma$. Then we have $f^\sigma=f$ and $g \in \Gamma_0(N)$.
\end{proposition}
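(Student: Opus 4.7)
My plan is to force $g$ into the normalizer of $\Gamma_0(N)$ using Proposition \ref{Stab f}, then appeal to Theorem \ref{newman2} to reduce modulo $\Gamma_0(N)$ to a power of $R^{N/s}$, and finally use that $R^{N/s}$ acts on $f$ as a scalar to conclude.

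Since $\Gamma_0(N)$ is defined over $\Q$, $f^\sigma$ lies in $S_k(\Gamma_0(N))$ and hence so does $f|g$. Applying Proposition \ref{Stab f} to the newform $f^\sigma$ gives $\Stab(f|g) = \Gamma_0(N)$, while directly $\Stab(f|g) = g^{-1}\Stab(f)g = g^{-1}\Gamma_0(N)g$ since $\Stab(f) = \Gamma_0(N)$. Comparing, $g$ normalizes $\Gamma_0(N)$ in $\SL_2(\Z)$, and by Theorem \ref{newman2} we may write $g = \gamma R^{jN/s}$ with $\gamma \in \Gamma_0(N)$ and $0 \le j < s$, where $s$ is the largest divisor of $24$ with $s^2 \mid N$. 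As $f|\gamma = f$, the hypothesis reduces to $f | R^{jN/s} = f^\sigma$.

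The crucial remaining step is to show that $R^{N/s}$ acts on $f$ as multiplication by some $\zeta \in K_f$, necessarily a root of unity of order dividing $s$ since $(R^{N/s})^s = R^N \in \Gamma_0(N)$ fixes $f$. Granted this, $\zeta^j f = f^\sigma$, and comparing first Fourier coefficients (both equal $1$) yields $\zeta^j = 1$. Therefore $f^\sigma = f$, and then $R^{jN/s} \in \Stab(f) = \Gamma_0(N)$ by Proposition \ref{Stab f}, forcing $j = 0$ and $g = \gamma \in \Gamma_0(N)$.

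The main obstacle is the scalar-action claim for $R^{N/s}$. Structurally, since $R^{N/s}$ is a $\Q$-defined automorphism of $X_0(N)$, it acts on the modular abelian variety $A_f$ attached to the Galois orbit of $f$; this factor is preserved because distinct Galois orbits yield non-isogenous simple factors of $J_0(N)^{\mathrm{new}}$, so $R^{N/s}$ lies in $\mathrm{End}_\Q^0(A_f)$ and acts diagonally on $H^0(A_f, \Omega^1) = \bigoplus_\iota \C \cdot f^\iota$. A more self-contained alternative, closer to the paper's style, would verify directly that $R^{N/s}$ commutes with the Hecke operators on $S_k^{\mathrm{new}}(\Gamma_0(N))$ and then invoke multiplicity one to conclude that $R^{N/s}$ preserves the eigenline through $f$.
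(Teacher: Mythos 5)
Your first two steps coincide with the paper's proof: Proposition~\ref{Stab f} forces $g$ into the normaliser of $\Gamma_0(N)$, and Theorem~\ref{newman2} reduces the hypothesis to $f\,|\,R^{jN/s}=f^\sigma$. But the ``crucial remaining step'' on which everything then rests --- that $R^{N/s}$ acts on $f$ by a scalar --- is false in general, and neither of your proposed justifications can repair it. Conjugating by the Fricke involution gives $W_N^{-1}R^{N/s}W_N=\sabcd{1}{-1/s}{0}{1}$, so $f\,|\,R^{N/s}=w\,\bigl(\textstyle\sum_n a_n e^{-2\pi i n/s}q^n\bigr)\big|W_N$, where $w=\pm1$ is the root number; this is a scalar multiple of $f$ if and only if $a_n=0$ for all $n\not\equiv 1\ (\mathrm{mod}\ s)$. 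That already fails for the weight~$2$ newform of level $N=144$ attached to $y^2=x^3-1$: there $s=12$ but $a_7\neq 0$, so $f\,|\,R^{12}$ is \emph{not} proportional to $f$. Consistently with this, $R^{N/s}$ does not commute with the Hecke operators: $R^{-N/s}\sabcd{1}{0}{0}{p}R^{N/s}=\sabcd{1}{0}{(p-1)N/s}{p}$ lies in the double coset $\Gamma_0(N)\sabcd{1}{0}{0}{p}\Gamma_0(N)$ only when $s\mid p-1$, so multiplicity one cannot be invoked. The abelian-variety variant fares no better: besides applying only to $k=2$, the factor $A_f$ need not be stable under this automorphism precisely because $A_f\otimes\C$ can be isogenous to old factors (in the example above, the level-$144$ curve is a quadratic twist of the one of conductor $36$, which contributes an old factor of $J_0(144)$).

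The paper's way past this point is to exploit $W_N$ rather than a scalar-action claim: since the root number $w$ is rational, $f^\sigma\,|\,W_N=wf^\sigma$, and applying $W_N$ to $f\,|\,R^{mN/s}=f^\sigma$ yields $f^\sigma=f\,|\,R'$ with $R'=\sabcd{1}{-m/s}{0}{1}$ upper triangular, whose effect on the $q$-expansion is explicit. Comparing first Fourier coefficients gives $e^{-2\pi i m/s}=1$, hence $s\mid m$, which simultaneously gives $f^\sigma=f$ and $R^{mN/s}\in\Gamma_0(N)$. You should replace your scalar-action lemma by this conjugation argument; note that this is also exactly where the newform hypotheses (normalisation $a_1=1$ and rationality of the $W_N$-eigenvalue) enter.
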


\begin{proof}
By Proposition \ref{Stab f}, the stabilisers of $f$ and $f^\sigma$ are both equal to $\Gamma_0(N)$. On the other hand $\Stab(f | g) = g^{-1} \Stab(f) g$, so that $g$ normalises $\Gamma_0(N)$. By Theorem \ref{newman2}, we have $g \in \Gamma_0(N/s)$, and there exists an integer $m \in \Z$ such that $\Gamma_0(N) g = \Gamma_0(N) R^{mN/s}$. Hence $f | g = f | R^{mN/s}$.

We now make use of the Atkin--Lehner involution $W_N = \sabcd 0{-1}N0$. Let $w \in \{\pm 1\}$ be the root number of $f$, defined by $f | W_N = w f$. Since $W_N$ is defined over $\Q$, we also have $f^\sigma | W_N = w f^\sigma$. Applying $W_N$ on both sides of the equality $f | g = f^\sigma$, we get
\begin{equation*}
w f^\sigma = f^\sigma | W_N = f|gW_N = f | R^{mN/s} W_N = f | W_N \left(W_N^{-1} R^{mN/s} W_N \right) = w f | R'
\end{equation*}
with $R'=W_N^{-1} R^{mN/s} W_N = \sabcd1{-m/s}01$. The Fourier expansion of $f | R'$ is given by
\begin{equation*}
f | R'(z) = f(z-m/s) = \sum_{n \geq 1} a_n(f) e^{-2\pi imn/s} e^{2\pi inz}.
\end{equation*}
Comparing the first term of the Fourier expansions, we get $e^{-2\pi im/s}=1$. This implies that $s$ divides $m$, $f^\sigma = f$ and $g \in \Gamma_0(N)$.
\end{proof}

We are now in the position to determine the exact number field of $f | g$. This refines Theorem \ref{thm:number fields general}(2) for $\Gamma_0(N)$ newforms.

\begin{theorem}\label{thm:exact nf Gamma0}
Let $f$ be a newform of weight $k$ on $\Gamma_0(N)$. Let $g=\sabcd ABCD \in \SL_2(\Z)$. Then the field generated by the Fourier coefficients of $f|_k g$ is equal to $K_f(\zeta_{N'})$ with $N'=N/\gcd(CD,N)$.
\end{theorem}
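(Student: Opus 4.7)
The plan is to establish the nontrivial inclusion, since Theorem \ref{thm:number fields general}(2) already gives that the coefficient field of $f|g$ is contained in $K_f(\zeta_{N'})$. Let $L$ denote the subfield of $\C$ generated by the Fourier coefficients of $f|g$. Because any subfield of $\C$ that is algebraic over $\Q$ coincides with its fixed field in $\C$, the reverse inclusion $K_f(\zeta_{N'}) \subseteq L$ will follow once I verify that every $\sigma \in \Aut(\C/L)$ restricts to the identity on $K_f(\zeta_{N'})$.

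Fix such a $\sigma$ and write $\sigma(\zeta_N) = \zeta_N^\lambda$ with $\lambda \in (\Z/N\Z)^\times$. The hypothesis $(f|g)^\sigma = f|g$, combined with Theorem \ref{fg sigma}, rearranges to the key identity
\begin{equation*}
f \bigm| (g g_\lambda^{-1}) = f^\sigma.
\end{equation*}
This is precisely the situation covered by Proposition \ref{fg fsigma}, applied with $g g_\lambda^{-1}$ in the role of $g$. Invoking that proposition yields two conclusions at once: $f^\sigma = f$, and $g g_\lambda^{-1} \in \Gamma_0(N)$.

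The first conclusion shows $\sigma|_{K_f} = \textrm{id}$. The second is translated into a condition on $\lambda$ via the congruence (\ref{glambda ginv}): the lower-left entry of $g_\lambda g^{-1} \bmod N$ equals $CD(\lambda^{-1} - 1)$, so membership in $\Gamma_0(N)$ forces $\lambda \equiv 1 \pmod{N'}$, which is exactly the condition that $\sigma$ fixes $\zeta_{N'}$. Combining both, $\sigma$ fixes $K_f(\zeta_{N'})$ pointwise, finishing the proof.

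The argument is a clean dovetailing of two results already in the paper: Theorem \ref{fg sigma} converts the Galois-invariance of $f|g$ into a modular-transformation identity for $f$ itself, and Proposition \ref{fg fsigma} (the genuine content, resting on Newman's theorems about subgroups and normalisers of $\Gamma_0(N)$) then rigidifies that identity into the required divisibility $N' \mid \lambda - 1$. The conceptual move to spot is the rearrangement $f|(g g_\lambda^{-1}) = f^\sigma$, which puts the arithmetic datum $f^\sigma = f$ and the geometric datum $g g_\lambda^{-1} \in \Gamma_0(N)$ on the same footing, and recovers, as a sharp converse, the very congruence that produced $N'$ in the upper bound of Theorem \ref{thm:number fields general}(2). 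I do not expect a serious obstacle: once one trusts Proposition \ref{fg fsigma}, the remainder is essentially the same congruence computation used to prove Theorem \ref{thm:number fields general}, read in reverse.
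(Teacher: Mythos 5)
Your proposal is correct and follows essentially the same route as the paper: reduce to showing every $\sigma$ fixing $f|g$ fixes $K_f(\zeta_{N'})$, rewrite $(f|g)^\sigma = f|g$ via Theorem \ref{fg sigma} as $f^\sigma = f\,|\,g g_\lambda^{-1}$, and invoke Proposition \ref{fg fsigma} together with the congruence \eqref{glambda ginv} to conclude $\sigma|_{K_f}=\mathrm{id}$ and $\lambda\equiv 1 \pmod{N'}$. No substantive differences from the paper's argument.
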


\begin{proof}
We have to show that every automorphism of $\C$ fixing $f|g$ also fixes $K_f(\zeta_{N'})$. Let $\sigma \in \Aut(\C)$ such that $(f | g)^\sigma = f |g$. Define $\lambda \in (\Z/N\Z)^\times$ by $\sigma(\zeta_N) = \zeta_N^\lambda$. By Theorem \ref{fg sigma}, we have $f^\sigma | g_\lambda = f | g$, so that $f^\sigma = f | g g_\lambda^{-1}$. By Proposition \ref{fg fsigma}, we have $f^\sigma = f$ and $g g_\lambda^{-1} \in \Gamma_0(N)$. It follows that $\sigma$ fixes $K_f$, and we have already seen in the proof of Theorem \ref{thm:number fields general} that the condition $g g_\lambda^{-1} \in \Gamma_0(N)$ is equivalent to $\lambda \equiv 1 \mod{N'}$. Therefore $\sigma$ fixes $K_f(\zeta_{N'})$.
\end{proof}

\begin{remark}
An inspection of the proofs shows that Proposition \ref{fg fsigma} and Theorem \ref{thm:exact nf Gamma0} are valid for elements $f=\sum a_n q^n$ of the new subspace of $S_k(\Gamma_0(N))$ that are eigenfunctions of $W_N$ and satisfy the following condition: if $s$ denotes the largest divisor of $6$ whose square divides $N$, then there exists $n\in\N$ which is coprime to $s$ such that $a_n$ is a non-zero rational number. One family of such forms is given by the traces $\sum_\sigma f^\sigma$ of newforms $f$, where the sum is over all embeddings of $K_f$ into $\C$.
\end{remark}

\begin{question}
It would be interesting to extend Theorem \ref{thm:exact nf Gamma0} to newforms with non-trivial Nebentypus. Is the field provided by Remark \ref{rk:composite field} best possible?
\end{question}

\begin{question}
What is the $\Q$-vector space (or $K_f$-vector space) generated by the Fourier coefficients of $f | g$?
\end{question}

\begin{question}
Can we bound the denominators of the Fourier coefficients of $f|g$? It is a classical fact that if a modular form $f \in M_k(\Gamma(N))$ has Fourier coefficients in some subring $A$ of $\C$, then for any $g \in \SL_2(\Z)$, the Fourier expansion of $f|g$ lies in $\Z[[q^{1/N}]] \otimes A[\zeta_N,1/N]$, see e.g. \cite[VII, Corollaire 3.13]{deligne-rapoport}. It would be interesting to bound the denominators effectively.
\end{question}

\bibliographystyle{plain}
\bibliography{refs}

\vspace{\baselineskip}

\end{document}